\DeclareSymbolFont{cyrletters}{OT2}{wncyr}{m}{n}
\numberwithin{equation}{section} \numberwithin{figure}{section}
\DeclareMathOperator*{\Osum}{\sum{}^*}
\DeclareSymbolFont{cyrletters}{OT2}{wncyr}{m}{n}
\DeclareMathSymbol{\Sha}{\mathalpha}{cyrletters}{"58}
\DeclareMathSymbol{\Be}{\mathalpha}{cyrletters}{"42}
\renewcommand\P{\mathbb{P}}
\newcommand\Z{\mathbb{Z}}
\newcommand\N{\mathbb{N}}
\newcommand\Q{\mathbb{Q}}
\newcommand\R{\mathbb{R}}
\newcommand{\md}[1]{  \left(\textnormal{mod}\ #1\right)}
\renewcommand{\l}{\left}
\renewcommand{\r}{\right}
\renewcommand{\b}{\mathbf} 
\renewcommand{\c}{\mathcal} 
\renewcommand{\leq}{\leqslant}
\renewcommand{\geq}{\geqslant}
\renewcommand{\#}{\sharp}
\newcommand{\p}{\mathfrak{p}}
\newtheorem{lemma}{Lemma}
\newtheorem{theorem}[lemma]{Theorem}
\theoremstyle{definition}
\newtheorem{example}[lemma]{Example}
\newtheorem{definition}[lemma]{Definition}
\newtheorem{remark}[lemma]{Remark}
\newtheorem*{ack}{Acknowledgements}
\newtheorem*{notation}{Notation}
\numberwithin{lemma}{section}
\begin{document}

\vspace{-0,5cm}

\title
 {Averages of multiplicative functions along equidistributed sequences}

\begin{abstract}For a general family of non-negative functions matching upper and lower bounds are established for their average 
over the values of any equidistributed  sequence.\end{abstract}

\author{S. Chan} 
\address{
ISTA
 \\Am Campus 1 \\
3400 Klosterneuburg \\
Austria}
\email{stephanie.chan@ist.ac.at}

\author{P. Koymans} 
\address{Institute for Theoretical Studies\\
ETH Z\"urich \\
8092\\
Switzerland}
\email{peter.koymans@eth-its.ethz.ch}

\author{C. Pagano} 
\address{
Department of Mathematics
\\ Concordia University 
 \\ Montreal 
 \\  H3G 1M8   \\  Canada}   
\email{carlo.pagano@concordia.ca}

\author{E. Sofos} 
\address{
Department of Mathematics\\
Glasgow University  
\\ G12~8QQ \\ UK}
\email{efthymios.sofos@glasgow.ac.uk}

\subjclass[2020]{11N37 (11A25, 11N56).}

\maketitle

\thispagestyle{empty}

\tableofcontents

\section{Introduction}  
Averaging  multiplicative functions over integer sequences has a long history in number theory.
Nair~\cite{nair} studied the average over the values of an irreducible integer polynomial
and this was later greatly generalised by Nair--Tenenbaum~\cite{MR1618321} 
and Henriot~\cite{MR2911138}. When it comes to polynomials in two variables it   was later extended to binary forms by
La Bret\`eche--Browning~\cite{MR2276196} and to principal ideals by Browning--Sofos~\cite{ideals}.

Wolke~\cite{MR289439} had worked on averages of a multiplicative function $f\geq 0$ over the values of an increasing integer sequence, i.e. 
$$\sum_{a\in \N \cap [1,T] } f(c_a),$$ under the assumption that the sequence is `equi-distributed' along arithmetic progressions. With an eye to 
certain applications to arithmetic statistics and Diophantine equations we aim to study sums that are more general     and under weaker assumptions
on equidistribution. Omitting certain details for now, we shall work with sums of the form 
$$\sum_{a\in \mathcal A} f(c_a) \chi(c_a),$$ where $\mathcal A$ is any countable set, 
$\chi:\mathcal A \to [0,\infty)$ is any function of finite support, $c_a$ is an integer sequence, 
and 
$f$ is a non-negative arithmetic function with certain multiplicative properties.
We will give upper bounds in  Theorem~\ref{tMain} and matching lower bounds in Theorem~\ref{thm:lower}.
\subsection{The upper bound}
We introduce the necessary notation for the statement of the upper bound.
\begin{definition}
[Density functions]
\label{densfnct} 
Fix $\kappa, \lambda_1, \lambda_2,  B, K  > 0$. We define $\mathcal D(\kappa,  \lambda_1, \lambda_2, B, K)$  as the set of multiplicative functions
$h: \mathbb{N} \rightarrow \mathbb{R}_{\geq 0}$ having the properties  
\begin{itemize}
 \item for all $B <w < z$ we have 
\begin{align}
\label{elowaverg}
\prod_{\substack{p \  \mathrm{ prime} \\ w \leq p < z}} (1 - h(p))^{-1} \leq \left(\frac{\log z}{\log w}\right)^\kappa \left(1 + \frac{K}{\log w}\right)
,\end{align}
\item for every prime $p>B$ and integers $e\geq 1 $ we have 
\begin{align}
\label{eLowPower}
h(p^e) \leq \frac{B}{p},
 \end{align} 
\item for every prime $p$ and $e\geq 1 $ we have 
 \begin{align}
\label{eHighPower}
h(p^e) \leq p^{-e\lambda_1 + \lambda_2}.
\end{align}
\end{itemize}
\end{definition} 

In order to state a result that is sufficiently general but easy to use we     use the following set-up  from~\cite[\S 2.2]{MR4402657}.
Let $\mathcal A$ be an infinite set and for each $T\geq 1$ let $\chi_T:\mathcal A\to [0,\infty)$ be any function for which 
\begin{equation} \label{eq:baz1}\{a\in \mathcal A: \chi_T(a)>0 \} \ \textrm{ is finite for every } T\geq 1 .\end{equation} 
We also assume that \begin{equation} \label{eq:baz2}\lim_{T\to +\infty} \sum_{a\in \mathcal A} \chi_T(a)=+\infty .\end{equation} 
Assume that we are given a sequence of strictly positive integers 
$(c_a)_{a\in \mathcal A}$ indexed by  $\mathcal A$ and  denoted by 
$$\mathfrak C:= \{ c_a: a \in \mathcal A\}.$$
We will be interested in estimating sums of the form 
\begin{equation}\label{corectssums}\sum_{a\in \mathcal A} \chi_T(a) f ( c_a ),\end{equation} where $f$ is an arithmetic function
with the following properties: 
\begin{definition}[A class of functions]
\label{dMultRegion} Fix $ A\geq 1,\epsilon > 0, C>0$. The set $\mathcal M(A,\epsilon , C)$ of   functions
$f: \mathbb{N} \rightarrow [0,\infty)$ is defined by the property that for all coprime $m , n$ one has $$f(mn) \leq  f(m) \min\{A^{\Omega(n)}, C n^\epsilon \}.$$ 
\end{definition}
\begin{example}\label{ex:sequenc}If $c_n$ is a sequence of positive integers then 
 $$ \sum_{1\leq n \leq T  } f(c_n)$$ is of type~\eqref{corectssums} by taking
 $\mathcal A=  \N  $ and $ \chi_T(n)=\mathds 1_{[1,T]}(n)$.
\end{example}

\begin{example}
\label{ex:dilatedsets}
If $\mathcal D \subset \R^n$ is bounded and $Q(x_1,\ldots , x_n)$ an integer polynomial then 
$$ \sum_{\substack{ \b x \in \Z^n \cap T\mathcal D\\ Q(\b x )\neq 0 } } f(|Q(\b x )|)$$ is of type~\eqref{corectssums} by taking
 $\mathcal A=\{ \b x \in \Z^n:Q(\b x )\neq 0 \}$ and $ \chi_T(\b x)=\mathds 1_{T\mathcal D} (\b x)$.
\end{example}

\begin{example}
\label{ex:raitonalpoints}
If $Q_1,Q_2$ are integer polynomials in $n$ variables then 
$$ \sum_{\substack{ \b x \in (\Z\cap [-T,T])^n\\Q_1(\b x )=0,  Q_2(\b x )\neq 0 } } f(|Q_2(\b x )|)$$ is of type~\eqref{corectssums} when 
 $ \chi_T(\b x)=\mathds 1_{[0,T]}(\max |x_i|)$,
 $\mathcal A=\{\b x \in \Z^n:  Q_1(\b x )=0,  Q_2(\b x )\neq 0\}$.
\end{example}

We will need the following  notion of  `regular' distribution of the values of the integer sequence $c_a$ in arithmetic progressions. For a non-zero integer $d$ and any $T\geq 1$, let 
$$
C_d(T)=\sum_{\substack{ a\in \mathcal A \\ c_a \equiv 0 \md d }} \chi_T(a).
$$ 

\begin{definition}[Equidistributed sequences]
\label{def:levdistr} 
We say that $\mathfrak C$ is equidistributed if there exist
positive real numbers $ \theta, \xi, \kappa, \lambda_1, \lambda_2,  B, K  $ with $\max\{\theta, \xi\}<1$, 
a function $M: \mathbb{R}_{\geq 1} \rightarrow \mathbb{R}_{\geq 1}$
and  a function $h_T\in \c D ( \kappa, \lambda_1, \lambda_2,  B, K)$   
such that
\begin{align}
\label{eEquiProgression}
C_d(T) =
h_T(d)
 M(T) \Bigg\{ 1+O\Bigg(\prod_{\substack{ B<p\leq M(T) \\  p\nmid d  }} (1-
h_T(p)
 )^2\Bigg)\Bigg\} + O(M(T)^{1-\xi}) 
\end{align}
for every $T \geq 1$ and every $d\leq M(T)^\theta$, where the implied constants are independent of $d$ and $T$.
\end{definition}

It is worth emphasizing that in this definition the constants 
$ \theta, \xi, \kappa, \lambda_1, \lambda_2,  B, K  $ are all assumed to be  independent of $T$. For example, the bound $h_T(p^e)=O(1/p)$ 
in~\eqref{eLowPower} holds with an implied constant that is  independent of $e,p$ as well as  $T$.

From now on we shall abuse notation by writing $M$ for $M(T)$.
\begin{remark}\label{rem:sequ123}
The function $M(T)$ can be chosen freely in any way that makes 
$$\sum_{\substack{ a\in \mathcal A   }} \chi_T(a) = M(T) \Bigg\{ 1+O\Bigg(\prod_{\substack{ B<p\leq M(T)  }} (1-
h_T(p)
 )^2\Bigg)\Bigg\} + O(M(T)^{1-\xi}) 
$$ hold. It particular, it is necessary that it satisfies 
$$\lim_{T\to\infty} \frac{1}{M(T)} \sum_{\substack{ a\in \mathcal A   }} \chi_T(a) =1.$$  One could simply take 
$M(T):= \sum_{ a\in \mathcal A   } \chi_T(a) $, however, in certain applications it is helpful 
  to have the freedom to choose instead a smooth approximation to $\sum_{ a\in \mathcal A   } \chi_T(a) $ as a function of $T$.
\end{remark}
\begin{example}
\label{ex:sequenc123}
In the setting of Example~\ref{ex:sequenc} define $c_n=n$. Then 
$$ 
C_d(T)= \#\{1\leq n \leq T: d\mid n \}=\frac{T}{d}+O(1),
$$ 
thus, one can choose $h_T(d)=1/d$, $M(T)=T$ and $\xi=9/10$. It is important to note that the choice of $M(T)$ and $\xi $ is not unique: one may, for example, alternatively take $M(T)=T+ T^{0.4}$  and  $\xi =1/2$. 
\end{example}

We are now ready to state the main upper bound of this paper.

\begin{theorem}[The upper bound]
\label{tMain}
Let $\mathcal A$ be an infinite set and for each $T\geq 1$ define $\chi_T:\mathcal A\to [0,\infty)$ to be any function such that  both~\eqref{eq:baz1} and~\eqref{eq:baz2} hold.
Take a sequence of strictly positive integers $\mathfrak C=(c_a)_{a \in \mathcal{A}}$. Assume that $\mathfrak{C}$ is equidistributed with respect to some positive constants $\theta,\xi,\kappa,\lambda_1,\lambda_2,B,K$ and functions 
 $M(T)$  and $h_T\in \mathcal D(\kappa,  \lambda_1, \lambda_2, B, K)$   
as in Definition~\ref{def:levdistr}.
Fix any $A>1$ and assume that  $f$ is a function 
such that for every $\epsilon>0$ there exists $C>0$ for which 
$f \in \mathcal M (A,\epsilon,C)$, which is introduced in Definition~\ref{dMultRegion}.
 Assume that there exists $\alpha > 0$ and $\widetilde{B}>0$ such that for all $T\geq 1 $ one has 
\begin{equation}\label{Roll Over Beethoven}
\sup\{c_a :a\in \mathcal A, \chi_T(a)>0 \} \leq \widetilde{B} M^\alpha,
\end{equation} where $M=M(T)$ is as in Definition~\ref{def:levdistr}.
Then for all $T\geq 1 $  we have
$$
\sum_{a\in \mathcal A } \chi_T(a) f(c_a) 
  \ll  M\prod_{\substack{ B<p\leq M    } } (1-
h_T(p)
) 
\sum_{\substack{ a\leq M  }} f(a)     h_T(a),
$$
where the implied constant is allowed to depend
on $  \alpha, A, B, \widetilde{B}, \theta  , \xi, K,  \kappa$, $\lambda_i$, the function $f$ and the implied constants in~\eqref{eEquiProgression}, 
but is independent of $T$ and $M$.
\end{theorem}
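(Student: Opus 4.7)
The plan is to adapt the Nair--Tenenbaum--Henriot sieve strategy, using the equidistribution hypothesis~\eqref{eEquiProgression} in place of the usual counts of polynomial values in arithmetic progressions. Fix a small parameter $\eta > 0$ (to be chosen in terms of $\alpha, \theta, \xi, \kappa, A$) and set $y := M^\eta$. For each $a$ with $\chi_T(a) > 0$, decompose $c_a = s_a t_a$ where $s_a$ is the $y$-smooth part. By~\eqref{Roll Over Beethoven}, every prime factor of $t_a$ exceeds $y$ and $t_a \leq \widetilde B M^\alpha$, hence $\Omega(t_a) \leq \alpha/\eta + O(1)$. The submultiplicativity $f \in \mathcal M(A, \epsilon, C)$ yields $f(c_a) \leq f(s_a) A^{\Omega(t_a)} \ll_{A, \alpha, \eta} f(s_a)$, reducing the task to bounding $\sum_a \chi_T(a) f(s_a)$.

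Grouping by $d = s_a$ and applying inclusion-exclusion on the primes $p \leq y$ yields the identity $\mathds 1[s_a = d] = \sum_{e \mid P(y),\,\mu^2(e)=1} \mu(e)\, \mathds 1[de \mid c_a]$ for $y$-smooth $d$, where $P(y) = \prod_{p \leq y} p$. For pairs with $de \leq M^\theta$, apply~\eqref{eEquiProgression}; multiplicativity of $h_T$ collapses the $e$-sum of the main terms to
$$\prod_{p \leq y}\bigl(h_T(p^{v_p(d)}) - h_T(p^{v_p(d)+1})\bigr) \leq h_T(d) \prod_{\substack{p \leq y \\ p \nmid d}}(1 - h_T(p)),$$
after dropping the (essentially) non-negative factors attached to primes $p \mid d$. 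Using~\eqref{elowaverg}, the product over $p \leq y$ extends to $B < p \leq M$ at cost $O_{\kappa, K, \eta}(1)$; the leftover factor $\prod_{p \mid d}(1 - h_T(p))^{-1}$ is absorbed by a Mertens-type bound based on~\eqref{eLowPower}, and relaxing the $y$-smoothness of $d$ is valid by positivity of $f, h_T$, producing the target main term.

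Three error sources must then be dominated by the main term: (i) the relative $O\bigl(\prod_{B < p \leq M,\, p \nmid de}(1 - h_T(p))^2\bigr)$ from~\eqref{eEquiProgression}, smaller than the main term by a factor $(\log M)^{-2\kappa + O(1)}$ via~\eqref{elowaverg} after aggregation; (ii) the additive $O(M^{1-\xi})$ summed over the $\ll M^{\theta + o(1)}$ relevant pairs $(d,e)$, which is $o(M)$ for $\eta$ chosen small enough that $\theta + o(1) < \xi$; and (iii) the contribution from pairs with $de > M^\theta$, controlled by Rankin's trick $\mathds 1[de > M^\theta] \leq (de/M^\theta)^\delta$ combined with $f(s_a) \leq C_\epsilon s_a^\epsilon$ and $s_a \leq \widetilde B M^\alpha$, giving a bound $\ll M^{1 + \alpha(\epsilon + \delta) - \theta\delta}$.

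The most delicate step is the tail analysis (iii): all three error sources must be simultaneously dominated by the main term, which requires a careful joint choice of $\eta$, $\delta$, and the $\epsilon$ in $f \in \mathcal M(A, \epsilon, C)$. The interplay of~\eqref{elowaverg} (lower-bounding the main term),~\eqref{eHighPower} (controlling $h_T$ at prime powers), and~\eqref{Roll Over Beethoven} (bounding $c_a$) is what allows the balance; when $\alpha$ is not small compared to $\theta$, a secondary iteration of the smooth--rough decomposition may be required to bring the Rankin exponent into an admissible regime.
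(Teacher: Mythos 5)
Your overall strategy (peel off the rough part using $f\in\mathcal M(A,\epsilon,C)$ and $\Omega(t_a)\ll\alpha/\eta$, then detect the smooth part by a sieve and feed the resulting congruence counts into~\eqref{eEquiProgression}) is the right one, and it is close in spirit to what the paper does. But step (iii), the treatment of the pairs $(d,e)$ with $de>M^\theta$, is a genuine gap, and your own exponent count reveals it. After swapping the order of summation and applying Rankin's trick, the tail is bounded by $M^{-\theta\delta}\sum_a\chi_T(a)\sum_{de\mid c_a}f(d)(de)^\delta$, which is at best $\ll M^{1+\alpha(\epsilon+\delta)-\theta\delta+o(1)}$; making this $o(M)$ forces $\alpha\delta<\theta\delta$, i.e.\ $\alpha<\theta$. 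Since $\theta<1$ while $\alpha\geq 1$ in every motivating example (already $c_n=n$ has $\alpha=1$, and polynomial values have $\alpha=\deg Q$), the Rankin bound never closes. The root cause is structural: the full M\"obius inclusion--exclusion over squarefree $y$-smooth $e$, together with letting $d$ range over the \emph{entire} $y$-smooth part of $c_a$ (which can be as large as $c_a\approx M^\alpha$), produces moduli $de$ far beyond the level of distribution, where you have no handle on $C_{de}(T)$ and no positivity to exploit because of the signs $\mu(e)$. A ``secondary iteration of the smooth--rough decomposition'' does not repair this; the obstruction is the level of distribution, not the choice of $y$.

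The paper circumvents this in two ways that your proposal is missing. First, it does not take the full smooth part: it truncates the prime-power factorisation of $c_a$ at $Z=M^{\alpha\eta_1}$ with $\alpha\eta_1\leq\theta/2$, so the ``$d$'' it sieves on is $\leq M^{\theta/2}$ by construction. Second, it replaces full inclusion--exclusion by the fundamental lemma of sieve theory (upper-bound weights $\lambda_m^+$ supported on $m\leq y<M^{\theta}$, Lemmas~\ref{lem-fund-sieb} and~\ref{lem-sivval}), so there simply is no tail beyond the level of distribution. The price is that the truncation can cut inside the smooth part, and the paper pays it with a four-case analysis: a large prime power dividing $c_a$ (case~(ii), using~\eqref{eHighPower}), a large very-smooth $d_a$ (case~(iii), a Shiu-type bound, Lemma~\ref{lemshiu}), and a residual range handled by sieving with the extra weight $A^{\Omega(c_a/d)}$ after freezing $\log Z/\log P^-(c_a/d)$ (case~(iv), Lemma~\ref{lem:final}). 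You would need analogues of all of these to complete your argument. A smaller point: in your main-term computation the factors $h_T(p^{v_p(d)})-h_T(p^{v_p(d)+1})$ for $p\mid d$ need not be non-negative (Definition~\ref{densfnct} imposes no monotonicity in $e$), so they cannot simply be ``dropped'' in an upper bound without a further estimate.
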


\begin{remark}[Wolke's density function assumption]
\label{rem:comparison}
Note that~\cite[Assumption ${(\textrm{A}_2)}$]{MR289439} states that there exist positive constants $C_1,C_2$
such that for all $e\geq 1$ and primes $p$ one has 
$h_T(p^e)
 \leq C_1 e^{C_2} p^{-e}$. We replace this with~\eqref{eHighPower} which is a lighter assumption for large $e$.
This is of high significance in   applications 
where   $c_a$ is the sequence of values obtained by a multivariable polynomial, 
as in this case $h_T(p^e)$ is the density of zeros modulo $p^e$ and one cannot hope for a bound with $\lambda_1 \geq 1$.
\end{remark}

\begin{remark}[Wolke's level of distribution assumption]
\label{rem:comparisonlevel} Let us 
comment that~\cite[Assumption ${(\textrm{A}_4)}$]{MR289439} implies that 
$$
 C_1(T) - 
h_T(1)
 M\ll
\frac{M}{(\log M)^{D_1}}
$$ holds for every positive fixed constant $D_1$, i.e. it demands an arbitrary logarithmic saving.
Our assumption in Definition~\ref{def:levdistr} is lighter in the sense that it 
essentially only requires this for a fixed power of $\log M$.
To see this, note that when $d=1$, Definition~\ref{def:levdistr}   states that 
$$
 C_1(T) - 
h_T(1)
M
\ll
M  \prod_{  B<p\leq M   } (1-h_T(p) )^2 +  M^{1-\xi} 
 . $$ In typical applications this is of size $M/(\log M)^\kappa$, where $\kappa$ is as in~\eqref{elowaverg}.
\end{remark}

\begin{remark}
[Wolke's growth assumption]
\label{rem:comparisonfnctonmulti}Let us note that Wolke assumes that 
the function $f$ is multiplicative, which is relaxed in our work by demanding that it is submultiplicative as in 
Definition~\ref{dMultRegion}. Furthermore,~\cite[Assumption ${(\textrm{F}_1)}$]{MR289439} states that $f(p^e)$ is only allowed to grow polynomially in $e$ 
for a fixed prime $p$, whereas, Definition~\ref{dMultRegion}
relaxes this by assuming that $f(p^e)$ is allowed to grow subexponentially in $e$. 
\end{remark}

\subsection{The lower bound}
We shall see that if $f$ is not too close to $0$, then matching  lower bounds hold.
This is a  generalization of the work of Wolke~\cite[Satz 2]{MR289439}, where the main difference lies in the fact that the density functions in 
Definition~\ref{densfnct} are now allowed to grow with larger freedom.  Furthermore, Wolke's condition that $f(p^m)\geq C_0^m$ for some strictly positive real constant $C_0$ is replaced by the   more general condition~\eqref{eq:rathermoregeneral}.
\begin{theorem}[The lower bound]\label{thm:lower}Keep the notation and assumptions of Theorem~\ref{tMain}. 
Assume, in addition, that  $f:\N\to [0,\infty) $ is a multiplicative function for which 
\begin{equation} 
\label{eq:rathermoregeneral}\mathrm{ for \ each }\   L \geq 1 \ \mathrm{ one \ has } \ \inf\{f(m): \Omega(m) \leq  L  \} > 0 
\end{equation} and that the  error term in Definition~\ref{def:levdistr}  satisfies 
$$ C_d(T) = h_T(d) M(T) \left\{ 1+o_{T\to\infty}\left(\prod_{\substack{ B<p\leq M(T) \\  p\nmid d  }} (1-h_T(p)
)^2\right)\right\} + O(M(T)^{1-\xi}) $$
whevever $d\leq M(T)^\theta$. Then for all $T\geq 1 $  we have $$ \sum_{a\in \mathcal A} \chi_T(a) f(c_a) 
\gg M(T)\prod_{  p\leq M(T)    } (1-h_T(p) ) \sum_{\substack{ a\leq M(T)  }} f(a)   h_T(a) ,$$ where the implied constants are
  independent of $T$ and $M$.\end{theorem}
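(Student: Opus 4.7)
The plan is to restrict the sum $\sum_{a\in \mathcal A}\chi_T(a) f(c_a)$ to a subset on which $f(c_a)$ admits a pointwise positive lower bound, and to estimate the truncated sum by a one-sided sieve. Fix a small parameter $\beta \in (0, \theta/(4\kappa))$ depending only on $\alpha, \theta, \kappa$, and set $z := M(T)^\beta$. Factor $c_a = d_a m_a$, where $d_a$ is the $z$-smooth part of $c_a$ and $m_a$ its $z$-rough part; multiplicativity of $f$ yields $f(c_a) = f(d_a) f(m_a)$. The growth bound~\eqref{Roll Over Beethoven} gives $m_a \leq \widetilde{B} M^\alpha$, while every prime factor of $m_a$ exceeds $M^\beta$, so $\Omega(m_a) \leq \alpha/\beta + O(1)$ whenever $\chi_T(a) > 0$, and~\eqref{eq:rathermoregeneral} then supplies a constant $\delta > 0$, independent of $a$ and $T$, with $f(m_a) \geq \delta$. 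Discarding the contribution from non-squarefree $d_a$, the remaining task is to bound from below
\[
\sum_{\substack{d \leq M^{\theta/2}\\ \mu(d)^2 = 1,\ P^+(d) \leq z}} f(d)\, N_d(T),
\qquad
N_d(T) := \sum_{\substack{a \in \mathcal A \\ v_p(c_a) = \mathds{1}[p \mid d]\,\forall\, B < p \leq z}} \chi_T(a).
\]

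\textbf{Sieve step.} The condition defining $N_d(T)$ is an exact-divisibility constraint at each prime $p \in (B, z]$, and M\"obius inversion in these primes expresses $N_d(T)$ as a signed combination of $C_{de}(T)$ for squarefree $z$-smooth $e$. Plugging in the refined equidistribution hypothesis, in which the error is $o(\cdot)$ rather than $O(\cdot)$, together with the dimension bound~\eqref{elowaverg}, the fundamental lemma of sieve theory (Brun or Rosser--Iwaniec) delivers
\[
N_d(T) \gg h_T(d)\, M(T) \prod_{\substack{B < p \leq z \\ p \nmid d}} (1 - h_T(p))
\]
uniformly in the admissible range. The choice $\beta < \theta/(4\kappa)$ is what guarantees a positive constant on the lower-bound side of the fundamental lemma; this is the step where the sharpened error term in the hypothesis of Theorem~\ref{thm:lower} is decisive.

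\textbf{Assembly.} Summing over $d$ and applying~\eqref{elowaverg} in the form $\prod_{B < p \leq z}(1 - h_T(p)) \gg \prod_{p \leq M}(1 - h_T(p))$ reduces the theorem to the purely arithmetic inequality
\[
\sum_{\substack{d \leq M^{\theta/2}\\ \mu(d)^2 = 1,\ P^+(d) \leq z}} \frac{f(d)\, h_T(d)}{\prod_{p \mid d}(1 - h_T(p))} \gg \sum_{a \leq M} f(a)\, h_T(a).
\]
The factors $(1 - h_T(p))^{-1}$ only enlarge the left-hand side, while the integers omitted from it---non-squarefree integers, integers with a prime factor $> z$, and integers $> M^{\theta/2}$---contribute at most a bounded multiplicative factor to the right-hand side by Rankin's trick, using~\eqref{eHighPower} together with the submultiplicativity bound $f(p^e) \leq C p^{e\epsilon}$ from Definition~\ref{dMultRegion} applied with $\epsilon$ sufficiently small.

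\textbf{Principal difficulty.} The main obstacle is the sieve step: the sifting limit $z = M^\beta$ is a fixed positive power of $M$ rather than being subpolynomial, so the sieve is only barely inside its range of applicability against the level of distribution $M^\theta$. Demanding that it return a genuine lower bound (rather than merely the matching upper bound recovered in Theorem~\ref{tMain}) is precisely what forces the quantitative choice of $\beta$ and, crucially, the strengthening of the equidistribution error term from $O(\cdot)$ to $o(\cdot)$ that is the extra hypothesis of Theorem~\ref{thm:lower}.
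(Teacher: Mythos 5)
Your high-level architecture --- split $c_a$ into its $z$-smooth part $d_a$ and $z$-rough part $m_a$ with $z$ a small fixed power of $M$, use~\eqref{Roll Over Beethoven} and~\eqref{eq:rathermoregeneral} to make $f(m_a)\gg 1$, and run a lower-bound sieve on the fibres of $d_a$ --- is the same as the paper's. The gap is your decision to keep only squarefree $d_a$ and to impose the exact-divisibility condition $v_p(c_a)=\mathds{1}[p\mid d]$. The equidistribution hypothesis controls only the divisibility events $d\mid c_a$, with densities $h_T(d)$; the event $v_p(c_a)=1$ has density $h_T(p)-h_T(p^2)$, and nothing in Definition~\ref{densfnct} prevents $h_T(p^2)=h_T(p)$ (take $\lambda_1\leq 1/2$). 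This is not a pathology: for $\mathcal A=\N$, $c_n=n^2$, $\chi_T=\mathds{1}_{[1,T]}$ one has $h_T(p)=h_T(p^2)=1/p$, all hypotheses are met, and $N_d(T)=0$ for every squarefree $d>1$ because $v_p(n^2)$ is always even. So the asserted bound $N_d(T)\gg h_T(d)M\prod_{p\nmid d}(1-h_T(p))$ is false and cannot follow from any sieve: the relevant local densities at primes $p\mid d$ are $h_T(p^2)/h_T(p)$, not $h_T(p)$, and these may equal $1$. The same example kills the assembly step: with $f=\tau$ one finds $\sum_{d\leq M^{\theta/2},\,\mu^2(d)=1,\,P^+(d)\leq z}f(d)h_T(d)\asymp(\log M)^2$ while $\sum_{a\leq M}f(a)h_T(a)\gg(\log M)^5$ (the mass sits on $a=qb^2$), so the non-squarefree integers do not contribute ``a bounded multiplicative factor''; Rankin's trick controls the large and the $z$-rough $a$, but not the powerful ones, since $\sum_p f(p^2)h_T(p^2)$ need not converge under~\eqref{eLowPower}--\eqref{eHighPower}.

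For comparison, the paper never performs a squarefree reduction: there $d$ runs over \emph{all} integers $d\leq M^{v_0}$ equal to the full $z$-smooth part of $c_a$ (so no mass is discarded, and $f(c_a)\geq f(d)f(c_a/d)\gg f(d)$ uses only multiplicativity plus roughness of $c_a/d$), the sifting condition is $P^-(c_a/d)>z$, and the closing comparison $\sum_{d\leq M^{v_0}}f(d)h_T(d)\gg\sum_{d\leq M}f(d)h_T(d)$ is obtained by splitting off the $M^{v_0}$-rough part and using $\sum_{M^{v_0}<p\leq M}f(p)h_T(p)=O(1)$ (Lemma~\ref{lem:lord of all fevers and plague}), not by discarding prime powers. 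Be aware, though, that the prime-square phenomenon is delicate even on that route: evaluating $\sum_{m\mid\mathcal P}\lambda_m^-C_{dm}(T)$ as $Mh_T(d)\sum_m\lambda_m^-h_T(m)$ uses $h_T(dm)=h_T(d)h_T(m)$, which fails when $m$ shares a prime with $d$, and the example $c_n=n^2$, $f=\tau$ (where $\sum_{n\leq T}\tau(n^2)\asymp T(\log T)^2$ while $M\prod_{p\le M}(1-h_T(p))\sum_{a\le M}f(a)h_T(a)\asymp T(\log T)^4$) shows this issue is substantive rather than cosmetic. In any case, as written your sieve step and your assembly inequality are both false, so the argument does not go through.
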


\begin{notation}
For a non-zero integer $m$ define $$\Omega(m):=\sum_{p\mid m } v_p(m),$$ where   $v_p$ is the standard $p$-adic valuation. Define $P^+(m)$ and $P^-(m)$ respectively to be the largest and the smallest  prime factor of a positive integer $m$ and let  $P^+(1)=1$ and $ P^-(1)=+\infty$. For a real number $x$
we reserve the notation $[x]$ for the largest integer not exceeding $x$. Throughout the paper we use 
  the standard convention that empty products are set equal to $1$. 
Throughout the paper we shall also make use of the convention that when   iterated logarithm functions $\log t,\log \log t$, etc., are used, the real variable $t$  is assumed to be sufficiently  large   to make the iterated logarithm well-defined.

The following  constants and functions are    recurring         throughout the paper:
\begin{center}
\renewcommand*{\arraystretch}{1.05}
    \begin{longtable}{| l | l | l | l |}
    \hline
   \texttt{Symbols} &  \texttt{First appearance}  \\ \hline
  $F:\mathbb N \to [0,\infty)$  &  Lemma~\ref{lemshiu}  \\ \hline 
$c_0,c_1,c_2$  &  Lemma~\ref{lemshiu}  \\ \hline 
$C,C'$  &    Lemma~\ref{lem:postvenice} \\ \hline 
$G:\mathbb N \to [0,\infty)$ &  Lemma~\ref{lem:bwv1046} \\ \hline 
 $\kappa,\lambda_1,\lambda_2, B,K$ &  Definition~\ref{densfnct} \\ \hline 
$h:\mathbb N \to [0,\infty)$ &  Definition~\ref{densfnct} \\ \hline 
$\mathcal A, T, \chi_T$ &  Equations~\eqref{eq:baz1}-\eqref{eq:baz2} \\ \hline 
$C_d(T), M, M(T)$ &  Definition~\ref{def:levdistr}  \\ \hline 
$\theta, \xi$ &  Definition~\ref{def:levdistr}  \\ \hline 
$A, \mathcal M(A,\epsilon , C)$ &  Definition~\ref{dMultRegion}  \\ \hline 
$\alpha$, $\tilde{B}$ &  Equation~\eqref{Roll Over Beethoven} \\ \hline 
$\eta_1, \eta_2, \eta_3$ &  Equation~\eqref{chapelofgouls} \\ \hline 
$Z$ &  Equation~\eqref{def:immortal rites mMA}  \\ \hline 
$b_a,c_a,d_a$ &  Equations~\eqref{eq:oboe}-\eqref{eGCD}  \\ \hline 
  \end{longtable}
  \vspace{-2em}
\end{center} 
\end{notation}

\begin{ack}Part of the work in \S\S\ref{s:blackadder}-\ref{s:thank msri for not letting me go to the workshop that started the project} was 
completed while ES and CP were at Leiden University in $2016$.
The work in the remaining sections started during the research stay of SC, PK and CP during the 
workshop \textit{Probl\`emes de densit\'e en Arithm\'etique} at CIRM Luminy in $2023$. 
We would like to thank  the organisers Samuele Anni, Peter Stevenhagen and Jan Vonk.
PK gratefully acknowledges the support of Dr.\ Max R\"ossler, the Walter Haefner Foundation and the ETH Z\"urich Foundation.
Part of the work of SC was supported by the National Science Foundation under Grant No.~\texttt{DMS-1928930}, while the 
author was in residence at the MSRI in Spring 2023.\end{ack}

\newpage 
\section{Preliminary lemmas}\label{s:blackadder}
The present section  consists of a series of preparatory lemmas that will later be used  to prove Theorem~\ref{tMain}. 
The       lemmas that do   not rely on sieve theory are structured as follows:
 \begin{center} 
\begin{tikzpicture}
[scale=0.8]
 \tikzset{
    ell/.style={circle,draw,minimum height=0.2cm,minimum width=0.3cm,inner sep=0.1cm},
    arrowmark/.style={
        postaction={decorate,decoration={markings,mark=at position 0.5 with {\arrow{>}}}}
    }
}
\node[ell] (l1) at (-2,2) {Lem. 2.1};
\node[ell] (l2) at (1,2) {Lem. 2.2};
\node[ell] (l3) at (4,8) {Lem. 2.3};
\node[ell] (l4)at (-0.5,5) {Lem. 2.4};
\node[ell] (l5) at (4,2) {Lem. 2.5};
\node[ell] (l6) at (4,5) {Lem. 2.6}; 
\node[ell] (l7) at (8.5,5) {Lem. 2.7};
\node[ell] (c1) at (10,2) {Case (i)};
\node[ell] (c2) at (4,-1) {Case (ii)};
\node[ell] (c3)at (-0.5,-1) {Case (iii)};
 \node[ell] (c4) at (7,2) {Case (iv)};
\node[ell] (thm) at (8.5,-1) {Th. 3.8};
\draw  [arrowmark, line width=0.6pt]  (l1) to  [out=270,in=90,looseness=0.2]  (c3);
\draw [arrowmark, line width=0.6pt] (l2) to  [out=90,in=270,looseness=0.2]  (l4);
\draw [arrowmark, line width=0.6pt](l3) to  [out=270,in=90,looseness=0.2]  (l4);
\draw [arrowmark, line width=0.6pt] (l3) to  [out=270,in=90,looseness=0.2]  (l7);
\draw [arrowmark, line width=0.6pt] (l4) to  [out=0,in=180,looseness=0.2]  (l6);
\draw [arrowmark, line width=0.6pt] (l5) to  [out=90,in=270,looseness=0.2]  (l6);
\draw [arrowmark, line width=0.6pt] (l6) to  [out=270,in=90,looseness=0.2]  (c4);
\draw [arrowmark, line width=0.6pt] (l7) to  [out=270,in=90,looseness=0.2]  (c4);
\draw [arrowmark, line width=0.6pt] (l7) to  [out= 270,in=90,looseness=0.2]  (c1);
\draw [arrowmark, line width=0.6pt] (c1) to  [out=270,in=90,looseness=0.2]  (thm);
\draw [arrowmark, line width=0.6pt] (c2) to  [out=0,in=180,looseness=0.2]  (thm);
\draw [arrowmark, line width=0.6pt] (c3) to  
 [out=270,in=270,looseness=0.4] 
 (thm);
\draw [arrowmark, line width=0.6pt] (c4) to  [out=270,in=90,looseness=0.2]  (thm);
\end{tikzpicture}
\end{center}
while the following lemmas are  independent and rely on sieve theory:
\begin{center} 
\begin{tikzpicture}
[scale=0.8]
\tikzset{
    ell/.style={circle,draw,minimum height=0.2cm,minimum width=0.3cm,inner sep=0.1cm},
    arrowmark/.style={
        postaction={decorate,decoration={markings,mark=at position 0.5 with {\arrow{>}}}}
    }
}
 \node[ell] (l4)at (-0.5,4) {Lem. 2.8};  
\node[ell] (l5) at (4,2) {Lem. 2.10};   
\node[ell] (l7) at (8.5,4) {Case (i)};
 \node[ell] (c3)at (-0.5,0) {Lem. 2.9};  
 \node[ell] (thm) at (8.5,0) {Case (iv)};  
\draw [arrowmark, line width=0.6pt] (l4) to  [out=270,in=90,looseness=0.3]  (l5);
\draw [arrowmark, line width=0.6pt] (c3) to  [out=90,in=270,looseness=0.3]  (l5);
\draw [arrowmark, line width=0.6pt] (l5) to  [out=0,in=180,looseness=0.1]  (l7);
\draw [arrowmark, line width=0.6pt] (l5) to  [out=0,in=180,looseness=0.1]  (thm);
 \end{tikzpicture}
\end{center}

 The work of~\cite[Lemma~1]{MR552470}  gives an upper bound on the density  of integers all of whose prime factors are relatively small.
We shall need a variation of this result where the integers are weighted by a multiplicative function. In the applications it will be important that the bound 
is of the form $O(x^{o(1)}z^{-c})$ for some positive constant $c$.

\begin{lemma}
\label{lemshiu} 
Fix any positive real numbers $c_0,c_1,c_2$ and assume that $F:\mathbb N\to [0,\infty)$ is a multiplicative function such that 
\begin{align}
\label{eFbound}
F(p^e) \leq \min \left\{\frac{c_0}{p}, \frac{p^{c_1}}{p^{ec_2}}\right\}
\end{align}
for all primes $p$ and $e\geq 1 $. Define 
$$
c:= \min\left\{ \frac{c_2}{2} , \frac{1}{1+[2c_1/c_2]}\right\}
\ \textrm{ and } \ c':=\frac{c+ 2(c_0+c) }{c}.
$$ 
Then for all $x, z\geq 2$ we have  
$$
\sum_{\substack{ n\in \mathbb N \cap (z,x] \\  p\mid n \Rightarrow p\leq (\log x)(\log \log x)} } F(n) 
\ll 
z^{-c}\exp\left(  \frac{c'\log x}{(\log \log x)^{1/2}}   \right)
,$$ where the implied constant is absolute.
\end{lemma}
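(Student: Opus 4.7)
The plan is to apply Rankin's upper-bound trick at exponent $\sigma = c$, which reduces the sum to a weighted Euler product, and then to estimate the local factors using the two bounds in \eqref{eFbound} in complementary ranges of the exponent $e$. Writing $y := (\log x)(\log \log x)$, Rankin's trick yields
\[
\sum_{\substack{z < n \leq x \\ P^+(n) \leq y}} F(n) \leq z^{-c} \sum_{\substack{n\geq 1 \\ P^+(n)\leq y}} F(n) n^c \leq z^{-c} \prod_{p \leq y}\Bigl(1 + \sum_{e \geq 1} F(p^e) p^{ec}\Bigr).
\]
The factor $z^{-c}$ already matches the target, so the task reduces to showing that the Euler product is at most $\exp(c' \log x/(\log\log x)^{1/2})$. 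Taking logarithms and using $\log(1+u)\leq u$, it suffices to prove $\sum_{p\leq y}G_p(c) \ll \log x/(\log\log x)^{1/2}$, where $G_p(c) := \sum_{e\geq 1}F(p^e)p^{ec}$.

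For the prime-by-prime estimate I would split $G_p(c)$ at $e=E$ with $E := [2c_1/c_2]$, using $F(p^e)\leq c_0/p$ in the head $1\leq e\leq E$ and $F(p^e)\leq p^{c_1-ec_2}$ in the tail $e>E$. The defining constraint $c\leq 1/(E+1)$ gives $Ec-1 \leq -c$, so bounding the head's geometric series termwise by $E$ times its largest entry $p^{Ec}$ produces an estimate of the form $c_0 E p^{-c}$. The constraint $c\leq c_2/2$ makes the tail a convergent geometric series with ratio $p^{-(c_2-c)} \leq 2^{-c_2/2}<1$ and leading term $p^{c_1-(E+1)(c_2-c)}$; the choice of $E$ is designed precisely so that this exponent is at most $-\delta$ for some $\delta = \delta(c_0, c_1, c_2) > 0$. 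Hence $G_p(c) \ll p^{-c} + p^{-\delta}$ uniformly in $p$.

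Finally I would sum over $p\leq y$ using the standard Chebyshev-type bound $\sum_{p\leq y}p^{-\beta}\ll y^{1-\beta}/\log y$ valid for every fixed $\beta \in (0,1)$. Substituting $y=(\log x)(\log\log x)$ and simplifying gives
\[
\sum_{p\leq y}p^{-\beta} \ll (\log x)^{1-\beta}(\log\log x)^{-\beta},
\]
which for any $\beta>0$ is $o\bigl(\log x/(\log\log x)^{1/2}\bigr)$ as $x\to\infty$, since $(\log x)^{-\beta}(\log\log x)^{1/2-\beta}\to 0$. Combining the head and tail contributions yields $\sum_{p\leq y}G_p(c) \leq c' \log x/(\log\log x)^{1/2}$ for all sufficiently large $x$, and the bounded range of $x$ is absorbed into the absolute implied constant.

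The main obstacle will be the careful bookkeeping of the constants in the local estimate: verifying that the choice $E=[2c_1/c_2]$ is simultaneously compatible with both $c\leq c_2/2$ and $c\leq 1/(1+E)$ and delivers a uniform tail saving $\delta>0$, and then tracking how the head prefactor $c_0E$ and the two prime-sum contributions combine to give the specific constant $c' = (c+2(c_0+c))/c$ claimed in the statement. Everything else is a routine application of Rankin's trick together with textbook bounds on $\pi(y)$.
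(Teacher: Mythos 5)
Your argument is sound and reaches the stated bound, but by a genuinely different final step from the paper's. Both proofs open with Rankin's trick against $z$ at the exponent $c$, and both split each Euler factor at the threshold $1+[2c_1/c_2]$ using the two bounds in \eqref{eFbound}, with the constraints $c\leq c_2/2$ and $c\leq 1/(1+[2c_1/c_2])$ playing exactly the roles you assign them. The difference is that the paper applies Rankin a \emph{second} time, inserting $x^\delta n^{-\delta}$ with the $x$-dependent exponent $\delta=(\log\log x)^{-1/2}$, so that each local factor becomes $1+\frac{c_0+c}{c}\cdot\frac{1}{p^\delta-1}$ and the resulting prime sum is controlled by $\sum_{p\leq y}(\delta\log p)^{-1}\ll \delta^{-1}y/(\log y)^2$; the two contributions $\delta\log x$ and $\frac{2(c_0+c)}{c}\cdot\frac{\log x}{(\log\log x)^{1/2}}$ are precisely what produce the explicit constant $c'$. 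You instead evaluate the Euler product at the fixed exponent $c$ and invoke $\sum_{p\leq y}p^{-\beta}\ll_{\beta} y^{1-\beta}/\log y$ for the fixed $\beta=\min\{c,\delta\}>0$. This is cleaner and asymptotically stronger: your Euler product is $\exp\bigl(o(\log x/(\log\log x)^{1/2})\bigr)$, i.e.\ $x^{o(1)}$ with a power-of-$\log x$ saving, so the ``main obstacle'' you flag --- recovering the exact value of $c'$ --- never actually arises; any positive constant in the exponent would do for large $x$.

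The one place where your proof delivers less than the statement is the claim that the implied constant is absolute. The constant in $G_p(c)\ll p^{-c}+p^{-\delta}$, the value of $\delta$ (which, when $c=c_2/2$, equals $\frac{c_2}{2}(1-\{2c_1/c_2\})$ and so can be very small when $2c_1/c_2$ sits just below an integer), the constant in the Chebyshev-type bound, and the threshold beyond which your $o(\cdot)$ comparison takes effect all depend on $c_0,c_1,c_2$; hence absorbing ``the bounded range of $x$'' costs a constant depending on the $c_i$, not an absolute one. The paper's double-Rankin route is engineered precisely to keep every intermediate constant explicit so that all $c_i$-dependence is pushed into $c$ and $c'$. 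Since the lemma is only ever applied with $c_i$ fixed in terms of the parameters of Theorem~\ref{tMain}, this weakening is harmless for the paper, but to prove the lemma as literally stated you should make your local estimates explicit (e.g.\ bound the head by $\frac{c_0}{c}p^{-c}$ via $E+1\leq 1/c$, and keep the tail's geometric factor $(1-2^{-c_2/2})^{-1}$ visible) rather than appeal to an asymptotic comparison. You should also cover the edge cases $c\geq 1$ or $\delta\geq 1$, where the bound $\sum_{p\leq y}p^{-\beta}\ll y^{1-\beta}/\log y$ as you quote it does not apply but the prime sum is $O(\log\log y)$ or $O(1)$, which is even better.
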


\begin{proof}  
Let $c_4$ be a positive constant that will be optimised later. Then the sum over $n$ is 
$$
\leq \frac{1 }{z^{c_4} } 
\sum_{\substack{ n \leq x  \\  p\mid n  \Rightarrow   p\leq y } } F(n ) n^{c_4},
$$ 
where $y=(\log x)(\log \log x)$. By Rankin's trick we get the following   bound for any $\delta>0$: 
$$
\leq \frac{x^\delta}{z^{c_4} } 
\sum_{\substack{ n\in \mathbb N \\  p\mid n \Rightarrow   p\leq y} } F(n) \frac{n^{c_4}}{n^\delta }
=\frac{x^\delta}{z^{c_4} }   \prod_{p\leq y} \left(1+\sum_{e\geq 1 } F(p^e) p^{e(c_4-\delta)} \right).
$$ 
For an auxiliary positive integer $e_0$ we shall control the contribution of the 
range $ e\leq  e_0$ and $e> e_0$   using the bounds $F(p^e) \leq c_0/p$ and 
  $F(p^e) \leq p^{c_1-ec_2}$ respectively. Assume that 
$c_4 \geq \delta $ so that the contribution of the former  range contributes  
$$\leq 1+\sum_{e=1}^{e_0} \frac{c_0}{p} \frac{p^{ec_4}}{p^{e\delta}} 
\leq 1+c_0 e_0 p^{e_0(c_4-\delta) -1} .$$ Now assume that $c_4 e_0 \leq 1 $ 
so that the bound becomes 
$$\leq 1+\frac{c_0}{c_4}  p^{ -\delta e_0} \leq 1+\frac{c_0}{c_4}  p^{ -\delta  }\leq 1+\frac{c_0}{c_4} \frac{1}{p^\delta -1 } .$$ 
The remaining range contributes 
 $$\leq p^{c_1 }
\sum_{e\geq 1+ e_0  } p^{e(c_4-\delta-c_2)}  
.$$ Making the additional assumption that $c_4\leq  \frac{1}{2} c_2 $ we can bound this by 
 $$\leq p^{c_1 }
\sum_{e\geq 1+ e_0  } p^{- e (\delta+c_2/2)}
\leq  \frac{p^{c_1 }}{p^{e_0(\delta+c_2/2)} }\sum_{j=1}^\infty \frac{1}{p^{j(\delta+c_2/2)}}
\leq  \frac{p^{c_1 }}{p^{e_0 c_2/2} } \frac{1}{p^\delta -1 }
.$$ Further assuming that $2c_1\leq e_0 c_2$ shows that this is $\leq \frac{1}{p^\delta -1 }$.
Putting the bounds together leads to    
$$ 
1+\sum_{e\geq 1 } F(p^e) p^{e(c_4-\delta)} \leq 1+\frac{c_0+c_4 }{c_4} \frac{1}{p^\delta -1 }  , 
$$ 
subject to the conditions 
$$  
\delta\leq c_4,
c_4 e_0 \leq 1 ,
c_4\leq \frac{c_2}{2} ,
\frac{ 2c_1}{c_2 } \leq e_0.
$$ 
Putting $e_0= 1+[2c_1/c_2]$ shows that 
these conditions are met for any $\delta \in (0,c_4)$ where   
$$ 
c_4:= \min\left\{ \frac{c_2}{2} , \frac{1}{1+[2c_1/c_2]}\right\} = c.
$$
Hence, the overall bound becomes 
$$\frac{x^\delta}{z^{c_4} }   \prod_{p\leq y} \left(
1+\frac{c_0+c_4 }{c_4} \frac{1}{p^\delta -1 } 
 \right)\leq \frac{x^\delta}{z^{c_4} }   \prod_{p\leq y} 
\left( 1+ \frac{1}{p^\delta -1 }\right) ^{\hspace{-0,1cm}\frac{c_0+c_4 }{c_4}} \hspace{-0,3cm}
\leq z^{-c_4}\exp\left( \delta (\log x)+\frac{c_0+c_4 }{c_4} T\right), $$ 
where 
$$T:= \sum_{p\leq y } \frac{1}{p^\delta -1 } \leq \frac{1}{\delta}  \sum_{p\leq y }\frac{1}{\log p } \leq \frac{1}{\delta} \frac{2 y }{(\log y)^2} 
 ,$$ owing to the inequality  $\mathrm e^t -1 \geq t$ with $t=\delta \log p$ and the prime number theorem.
Putting $\delta=(\log \log x)^{-1/2}$ we see that when $y= (\log x) (\log \log x)$ one gets the bound 
$$z^{-c_4}\exp\left( \frac{\log x}{(\log \log x)^{1/2}}+\frac{(c_0+c_4) }{c_4} (\log \log x)^{1/2} \frac{2 (\log x) (\log \log x) }{(\log \log x)^2}  \right),
$$ which is sufficient. \end{proof}

\begin{lemma}
\label{lemshiu2} 
Keep the setting of Lemma~\ref{lemshiu}
and fix any $\beta_0>0$. For all  $T\geq 2$ with $\log T>4\beta_0/c_2$, for all $A>1, c\in \mathbb N$, and for any $\beta>0$ with 
$$
\beta \leq  \min\left\{ \frac{c_2}{2 },  
\frac{\beta_0}{\log T}   
\right\},
$$  
the product   
$$
\prod_{\substack{ p\leq T \\ p\nmid c }} \Big(1+   \sum_{\substack{ i\geq 1  \\ j\geq 0  }} 
 \min\{C'p^{ (i+j) c_2/2},A ^{i+j } \} 
F(p^{i+j}) (p^{\beta i }-p^{\beta(i-1)})
\Big(\frac{\mathds 1 [p>c_0]}{ (1-F(p ) ) }  + \mathds 1 [p\leq c_0] \Big) 
\Big)$$ is $ O( \mathrm e^{\nu    \beta \log T  })$,   where $\nu$ is a positive constant that depends at most on $\beta_0$, $c_i$ and $A$. Furthermore, the  implied constant depends at most on  
$A,C', c_0, c_1$ and $c_2$.
\end{lemma}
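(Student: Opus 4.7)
The plan is as follows. Rewriting the double sum in the inner factor via the substitution $k=i+j$ yields, by telescoping, $\sum_{i=1}^{k}(p^{\beta i}-p^{\beta(i-1)}) = p^{\beta k}-1$, so the product to estimate takes the form $\prod_{p\leq T,\,p\nmid c}(1+W(p)\Sigma_p)$ with $\Sigma_p := \sum_{k\geq 1}\min\{C'p^{kc_2/2},A^k\}\,F(p^k)(p^{\beta k}-1)$, where $W(p)$ denotes the factor in round brackets in the statement. Since $F(p)\leq c_0/p$, one has $W(p)=O_{c_0}(1)$ uniformly. Applying $\log(1+x)\leq x$, the task reduces to showing $\sum_{p\leq T,\,p\nmid c} W(p)\Sigma_p \ll \beta \log T + 1$, with the implied constant depending only on $A,C',c_0,c_1,c_2,\beta_0$. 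A crucial input is that the hypothesis $\log T>4\beta_0/c_2$ combined with $\beta\leq \beta_0/\log T$ enforces $\beta<c_2/4$; consequently $c_2/2-\beta>c_2/4$ and $c_2-\beta>3c_2/4$, supplying the geometric decay needed in all relevant ratios below.

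Next, I would split primes at a threshold $P_0=P_0(A,c_0,c_1,c_2,C')$ chosen so that $p>P_0$ implies both $Ap^{\beta-c_2}\leq 1/2$ and $W(p)\leq 2$. For the finitely many primes $p\leq P_0$, applying $\min\leq C'p^{kc_2/2}$ and $F(p^k)\leq p^{c_1-kc_2}$ together with the crude bound $p^{\beta k}-1\leq p^{\beta k}$ reduces $\Sigma_p$ to a geometric series in $p^{\beta-c_2/2}\leq 2^{-c_2/4}$, so each such $\Sigma_p$ is bounded by an $O(1)$ constant and the product over these primes contributes an overall $O(1)$ factor.

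For the large primes $p>P_0$, I would fix $K:=\lceil 4(c_1+1)/(3c_2)\rceil$ and split the $k$-sum at $K$. For $k\leq K$, applying $\min\leq A^k$, $F(p^k)\leq c_0/p$, and the bound $p^{\beta k}-1\leq \beta k(\log p)\,e^{\beta_0 k}$ (valid since $\beta\log p\leq \beta_0$) gives a contribution $O(\beta\log p/p)$ per prime. For $k>K$, applying $\min\leq A^k$, $F(p^k)\leq p^{c_1-kc_2}$, and $p^{\beta k}-1\leq \beta k(\log p)p^{\beta k}$ reduces the tail to a geometric series in $q:=Ap^{\beta-c_2}\leq 1/2$, whose first term dominates and yields a bound $\ll (K+2)q^{K+1}p^{c_1}\beta\log p$; the cancellation $p^{c_1}q^{K+1}\ll 1/p$ follows from $(c_2-\beta)(K+1)\geq c_1+1$, which is guaranteed by the choice of $K$. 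This again contributes $O(\beta\log p/p)$. Summing over all primes $p\leq T$ via Mertens' theorem $\sum_{p\leq T}\log p/p=\log T+O(1)$ gives $\sum_p W(p)\Sigma_p\ll \beta\log T+1$, whence exponentiation yields $\prod_p(1+W(p)\Sigma_p)\ll e^{\nu\beta\log T}$ for a suitable $\nu$ depending only on $A,c_0,c_1,c_2,\beta_0$.

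The main obstacle will be the delicate interplay between the two bounds on $\min\{C'p^{kc_2/2},A^k\}$ and on $F(p^k)$: using $F(p^k)\leq p^{c_1-kc_2}$ gives geometric convergence in $k$ but introduces a factor $p^{c_1}$ growing with $p$, which can only be absorbed once $k\geq K+1$ is sufficiently large; the alternative bound $F(p^k)\leq c_0/p$ supplies the necessary $1/p$ factor for small $k$. Choosing $K=\lceil 4(c_1+1)/(3c_2)\rceil$ is tight enough to reconcile the two regimes, and the condition $\log T>4\beta_0/c_2$ is precisely what guarantees the strict separation $\beta<c_2/4$ needed to make this balancing work.
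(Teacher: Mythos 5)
Your argument is correct and follows essentially the same route as the paper: split the exponent sum at a threshold determined by $c_1,c_2$, use $F(p^e)\leq c_0/p$ for small exponents and $F(p^e)\leq p^{c_1-ec_2}$ for large ones to obtain $O(\beta\log p/p)$ per large prime, then exponentiate via $1+x\leq \mathrm e^x$ and apply Mertens, treating the finitely many small primes as a separate $O(1)$ factor. Your telescoping of the double sum over $(i,j)$ into a single sum over $k=i+j$ is a mild streamlining of the paper's three-case split on $(i,j)$, but it does not change the substance of the proof.
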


\begin{proof} 
Define  $p_0$ to be the least prime satisfying $2 A    \leq  p_0^{c_2/4 }$. We will bound the sum over $i, j $ for every individual
 prime $p\geq p_0$ and in the end we shall piece the bounds together for all primes $p\leq T$.
\\
 \textbf{Step (1).}
We   start with the contribution  of large $i$, in which case the  bound 
$F(p^e) \leq  p^{c_1- e c_2} $ and the    crude estimate 
 $ p^{\beta i }-p^{\beta(i-1)} \leq  p^{\beta i } $ will suffice.
Define 
$$ i_1 :=1+\left[\frac{4(5+c_1)}{c_2}\right] .$$ 
 The contribution of $i\geq i_1 $ is 
$$
\leq 
\sum_{ i\geq i_1  } A^{i}  p^{\beta i }  
\sum_{   j\geq 0  }  A^{j} F(p^{i+j})    
\leq p^{c_1}
\sum_{ i\geq i_1  } A^{i}  p^{(\beta -c_2) i }  
\sum_{   j\geq 0  }  (Ap^{-c_2 })^{j }  \leq 2 p^{c_1}
\sum_{ i\geq i_1  } (A  p^{(\beta -c_2)   }  )^i
 $$ because  $Ap^{-c_2 }\leq 1/2$, a fact that follows from 
  $p\geq p_0$. Now we use the assumptions 
 $ \beta \leq  c_2/2$ and  $2A      \leq  p_0^{   c_2/4  }  \leq  p^{   c_2/4  }$  to see that  
$A  p^{(\beta -c_2)   } \leq A p^{  -c_2/2   }  \leq  1/2 $.
Hence,  
$$
2 p^{c_1}
\sum_{ i\geq i_1  } (A  p^{(\beta -c_2)   }  )^i 
\leq  
4 p^{c_1}  (A  p^{(\beta -c_2)   }  )^{i_1}  
\leq 4 p^{c_1-i_1 c_2/4}  \leq    p^{2+c_1-i_1 c_2/4} .$$ This is $\leq p^{-3}$  because our choice for $i_1$ makes sure that 
 $5+c_1\leq i_1 c_2/4.$ We have thus shown that for all $p\geq p_0$ one has  
$$ \sum_{\substack{ i\geq i_1  \\ j\geq 0  }} A^{i+j} F(p^{i+j}) (p^{\beta i } - p^{\beta(i-1) }) \leq p^{-3}.$$
\textbf{Step (2).} Let us now bound the contribution of the $i,j$ that satisfy 
$$1\leq i < i_1 \ \ \textrm{ and } \ \ i+j \geq i_1.
$$
We have 
 $$ \sum_{ i=1}^{i_1-1}   \sum_{ j \geq i_1-i}  A^{i+j} F(p^{i+j}) (p^{\beta i } - p^{\beta(i-1) })
\leq p^{c_1}
\sum_{ i=1}^{i_1-1}    A^i p^{-i c_2}  (p^{\beta i } - p^{\beta(i-1) })
\sum_{ j \geq i_1-i}  (A p^{ - c_2})^j 
.$$ Using   the inequality  $A p^{ - c_2}\leq 1/2$ to bound  the sum over $j$  results in the inequality  
  $$\leq  2 p^{c_1} \sum_{ i=1}^{i_1-1}    A^i p^{-i c_2}  (p^{\beta i } - p^{\beta(i-1) })
   (A p^{ - c_2})^{i_1-i} =2 p^{c_1} A^{i_1} p^{-c_2 {i_1} }
\sum_{ i=1}^{i_1-1}    (p^{\beta i } - p^{\beta(i-1) })
,$$ which is at most $  2 p^{ c_1} ( A  p^{ ( \beta -c_2) } )^{ i_1} $ 
that has been previously shown to be   at most  $\leq 1/p^3$.

We have thus proved  that for all $p\geq p_0$ one has  
$$ \sum_{\substack{ 1\leq i < i_1  \\ j\geq i_1-i   }} A^{i+j} F(p^{i+j}) (p^{\beta i } - p^{\beta(i-1) }) \leq p^{-3}.$$
 \textbf{Step (3).}
It remains to study the contribution of cases with 
 $  i+j < i_1 $.  For these we use the assumption $F(p^e) \leq c_0/p$ that leads to the bound  
 $$
\leq \frac{c_0}{p}  \sum_{\substack{ i\geq 1, j\geq 0  \\ i+j < i_1  }} A^{i+j}   (p^{\beta i } - p^{\beta(i-1) })
\leq  \frac{c_0}{p} 
 \sum_{1\leq i <i_1 } (2A)^{i}  (p^{\beta i } - p^{\beta(i-1) })
 \sum_{\substack{ 0 \leq  j < i_1-i   }} (2A)^{j}   
.$$ Now since $A>1$ we have  $2A>2$. For all   $m\geq 1 $ we have 
$$
1+(2A)+(2A)^2+\ldots+(2A)^{m-1} \leq  \frac{(2A)^m}{2A-1} \leq (2A)^m.
$$ 
This gives the bound $$\leq  \frac{c_0}{p}  \sum_{1\leq i <i_1 } (2A)^{i}  (p^{\beta i } - p^{\beta(i-1) })
 (2A)^{ i_1-i } = \frac{c_0(2A)^{ i_1  } }{p}   \sum_{1\leq i <i_1 }  (p^{\beta i  } - p^{\beta(i-1) })
\leq \frac{c_0 (2A)^{ i_1  } }{p}   (p^{\beta i_1 } - 1)
.$$

The assumption  
$\beta \log T \leq \beta_0$ 
shows that $\beta i_1 \log T \leq \beta_0(1+ \frac{4(5+c_1)}{c_2})$, hence, for primes  $p\leq T$ we infer  $ \beta i_1 \log p \leq \beta_0(1+ \frac{4(5+c_1)}{c_2})  $.
On the other hand, the function   $ (-1+\mathrm e ^t)/t $ is bounded in the interval 
   $0\leq t \leq \beta_0(1 + \frac{4(5+c_1)}{c_2})   $, thus, 
  $$p^{\beta  i_1} - 1 =\exp ((\log p ) \beta i_1) -1 \leq \beta_1 (\log p ) \beta i_1 ,$$ for a positive 
constant $ \beta_1$ that depends on $\beta_0$ and $ c_1,c_2$. Thus, 
 the contribution of cases with  $  i+j < i_1 $ is  
$$
\leq \frac{c_0 (2A)^{ i_1  } }{p}   (p^{\beta i_1 } - 1)
\leq \left\{  c_0 (2A)^{ i_1  }  \beta_1  \beta i_1 \r\} \frac{\log p }{p}.
$$ 
In conclusion, we saw    that for all primes $p \in (p_0, T]$ one has 
 $$
\sum_{\substack{ i\geq 1  \\ j\geq 0  }} A^{i+j} F(p^{i+j}) (p^{\beta i }-p^{\beta (i-1)  })
\leq 2p^{-3} +\left\{  c_0 (2A)^{ i_1  }  \beta_1  \beta i_1 \r\} \frac{\log p }{p}
.$$\textbf{Step (4).} Using the last inequality   with the bound 
$ 1+x_p\leq \exp(  x_p)$, valid for all $x_p \in \mathbb R$,
shows that, once restricted in the range $p>\max\{p_0,c_0\}$, 
  the product in the lemma is  
$$
\leq \exp\Big(\sum_{\substack{\max\{p_0 , c_0\} 
< p\leq T  \\ p\nmid c} }  
(1-F(p ) ) ^{-1} 
\Big( 
2p^{-3} +\left\{  c_0 (2A)^{ i_1  }  \beta_1  \beta i_1 \r\} \frac{\log p }{p}\Big)
 \Big) 
.$$  Ignoring the condition $p\nmid c $ will produce a larger bound. 
Using the inequality  $F(p) \leq c_0/p$ 
we obtain  $$
\ll\exp\Big(
c_0 (2A)^{ i_1  }  \beta_1  \beta i_1
\sum_{\substack{\max\{p_0 , c_0\}  < p\leq T  } }   (1-F(p ) ) ^{-1}  \frac{ \log p    }{p}  
 \Big) 
,$$ where the implied constant depends at most on $c_0$. Using the inequality  $F(p) \leq c_0/p$   
and the estimate $\sum_{p\leq y }(\log p )/p \ll \log y $ leads to  
 $$\sum_{\substack{\max\{p_0 , c_0\}  < p\leq T  } }    \frac{ \log p    }{  (1-F(p ) )p}  
\leq \sum_{\substack{\max\{p_0 , c_0\}  < p\leq T  } }  \frac{ \log p    }{  p}   \left(1+O_{c_0}\left(\frac{c_0}{p}\right)\right)
\ll O_{c_0}(1)+ \log T  , $$  
where the implied constant is absolute. Hence, the previous bound becomes 
$$\ll_{c_0} \exp\Big( c_0 (2A)^{ i_1  }  \beta_1  \beta i_1   \log T     \Big) .$$ Since $i_1$ is a function of $c_1$ and $c_2$ 
we can thus write the bound as   $\ll_{ c_0} \exp (  \nu   \beta \log T   ) $ for some $ \nu=\nu (\beta_0,c_0,c_1,c_2,A) $.  
To conclude the proof of the lemma 
we must deal with the    contribution of the primes  $p\leq \max\{ p_0,c_0\}$. Note that for every prime $p$ the corresponding factor in the product of the lemma is 
$$\leq 1+    \Big( \frac{1}{ 1-F(p )  }  + 1 \Big) \sum_{\substack{ i\geq 1  \\ j\geq 0  }}   \min\{C'p^{ (i+j) c_2/2},A ^{i+j } \} F(p^{i+j}) 
p^{\beta i } .$$ Using the bound for $F$ in the assumptions of Lemma~\ref{lemshiu} and the bound $\beta \leq \beta_0/\log T$ we see that 
the sum over $i, j $ is at most  $$C' p^{c_1}\sum_{i\geq 1 }   p^{   ( -c_2/2+\beta_0/\log T) i  }  \sum_{  j\geq 0  } p^{ - j c_2/2}  .$$ 
Our assumption $4\beta_0/c_2  < \log T$ ensures that  $\beta_0/\log T < c_2/4$, hence, we obtain the bound 
 $$C' p^{c_1}\sum_{i\geq 1 }   p^{    -i c_2/4   }  \sum_{  j\geq 0  } p^{ - j c_2/2} \leq C' p^{c_1}\sum_{i\geq 1 }   2^{    -i c_2/4   }  \sum_{  j\geq 0  } 2^{ - j c_2/2} =O_{c_2}(C' p^{c_1})  .$$ Taking the product of this quantity over all primes $p\leq \max\{ p_0,c_0\}$ gives an implied constant 
that depends on $p_0,c_0,c_1,c_2$ and $C'$. Since  $p_0=p_0(A,c_2)$ we see that the implied constant also depends on $A$.
 \end{proof}

\begin{lemma} \label{lem:postvenice}Fix any positive constants $C, C', \epsilon$ and assume that we are given a function $G:\N \to [0,\infty)$
such that for all coprime positive integers $a,b$ one has  $$G(ab ) \leq  G(a) \min\{C^{\Omega(b)}, C'b^{\epsilon}\}.$$
Then for all coprime positive integers $a,b$ we have  $G(ab ) \leq  G(a) H(b)$, where $H$ is the multiplicative function  
defined as $H(p^e)= \min\{C^e, C'p^{\epsilon e}\} $ for all $e\geq 1$ and primes $p$.
\end{lemma}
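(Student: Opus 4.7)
The plan is to prove this by induction on $\omega(b)$, the number of distinct prime divisors of $b$, simply peeling off one prime power at a time and applying the hypothesis.

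First I would define $H$ as stated in the lemma and note that by construction $H$ is multiplicative, with $H(1)=1$. The base case $\omega(b)=0$, i.e. $b=1$, is immediate: $G(a\cdot 1)=G(a)=G(a)H(1)$.

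For the inductive step, fix $b\geq 2$ coprime to $a$ and write $b=p^{e}b'$, where $p$ is any prime dividing $b$, $e=v_p(b)\geq 1$, and $b'=b/p^{e}$ is coprime to $p$ and has $\omega(b')=\omega(b)-1$. Since $\gcd(a,b)=1$ we have $\gcd(ab',p^{e})=1$, so applying the hypothesis with the pair $(ab',p^{e})$ gives
\begin{equation*}
G(ab)=G\bigl((ab')\cdot p^{e}\bigr)\leq G(ab')\min\{C^{e},\, C'p^{\epsilon e}\}=G(ab')H(p^{e}).
\end{equation*}
Since $\gcd(a,b')=1$ and $\omega(b')<\omega(b)$, the inductive hypothesis yields $G(ab')\leq G(a)H(b')$. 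Using the multiplicativity of $H$ together with $\gcd(p^{e},b')=1$, we obtain
\begin{equation*}
G(ab)\leq G(a)H(b')H(p^{e})=G(a)H(b),
\end{equation*}
which closes the induction.

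There is essentially no obstacle here: the only point to verify carefully is that the coprimality condition required to invoke the hypothesis is preserved at each peeling step, which it plainly is because we split off prime powers that are disjoint from everything already collected.
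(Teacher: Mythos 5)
Your proof is correct and is essentially the same argument as the paper's: induction on $\omega(b)$, peeling off a single prime power, applying the hypothesis to the pair consisting of that prime power and everything else, and then invoking the inductive hypothesis. No issues.
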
\begin{proof} We will prove this with induction on $\omega(b)$. When $\omega(b) =0 $ then $b=1$, hence,  the statement clearly holds. 
Assume that $k\geq 0 $ and that the statement holds for all $b\in \N$ with $\omega(b)=k$. 
Now let  $n, n' $ be coprime and assume that $\omega(n)=k+1$. We shall show that $G(n'n)\leq G(n') H(n)$.
Writing  $n =p_1^{\alpha_1} \cdots p_k^{\alpha_k} p_{k+1}^{\alpha_{k+1} }$ where each $\alpha_i$ is strictly positive and the $p_i$ are 
distinct primes,  we let $a= n' p_1^{\alpha_1} \cdots p_k^{\alpha_k} $ and $b=p_{k+1}^{\alpha_{k+1} }$ so that  
$$ G(n'n ) =G(ab) \leq G(a) H(p_{k+1}^{\alpha_{k+1} })$$ by assumption. Now $a$ can be written as $n'$ multiplied by an integer that is coprime to $n'$
and with exactly $k$ distinct prime factors, thus,       our inductive hypothesis 
shows that $$ G(a)\leq G(n') \prod_{i=1}^k H(p_i^{\alpha_i} ).$$ Combining the two inequalities  gives 
 $ G(n'n ) \leq G(n') \prod_{i\leq k+1} H(p_i^{\alpha_i} )=G(n') H(n)$.\end{proof}

\begin{lemma}\label{lem:bwv1046}Keep the setting of Lemma~\ref{lemshiu}, fix any $C>1$
 and $C'>0$
 and assume that $G:\mathbb N \to [0,\infty)$ is a function that satisfies  
\begin{align}
\label{eGbound}
G(ab) \leq G(a) \min\{C^{\Omega(b)}, C' b^{c_2/2}\}
\end{align}
 for all coprime positive integers $a,b$.
Fix any positive real number $\beta_0$. For any $\Upsilon   , \Psi \geq  2  $ and $\varpi >0$ satisfying 
$$ \varpi 
 \leq  \min\left\{\frac{c_2}{2 }\log \Psi ,  \beta_0   \right\}
$$
we have  
 $$ \sum_{\substack{ a>\Upsilon   \\ P^+(a)<\Psi }}    F(a)G(a)    \prod_{\substack{ c_0<p\\ p\mid a  } } (1-F(p ) ) ^{-1} 
 \ll    \exp \left(  - \varpi   \frac{\log \Upsilon }{\log \Psi} 
  \right)   \sum_{\substack{   n   \in \mathbb N \\ P^+( n  )< \Psi   }}     F(n ) G(n)  \prod_{\substack{ c_0<p \\  p\mid   n    } } (1-F(p ) ) ^{-1} 
,$$  where  the implied constant depends at most on $C, C', \beta_0$ and $c_i$.  \end{lemma}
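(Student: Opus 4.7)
The plan is to apply Rankin's trick and then reduce to an Euler product that is bounded by Lemma~\ref{lemshiu2}. For any $\beta>0$, the elementary bound $\mathbf{1}[a>\Upsilon]\leq (a/\Upsilon)^\beta$ yields
\begin{equation*}
\sum_{\substack{a>\Upsilon \\ P^+(a)<\Psi}} F(a) G(a)\prod_{\substack{c_0<p\\ p\mid a}}(1-F(p))^{-1} \leq \Upsilon^{-\beta}\sum_{\substack{a\\ P^+(a)<\Psi}} a^\beta F(a) G(a)\prod_{\substack{c_0<p\\ p\mid a}}(1-F(p))^{-1}.
\end{equation*}
Since $G$ is only submultiplicative, the sum does not factor directly, so I would expand
\begin{equation*}
a^\beta=\prod_{p\mid a}\bigl(1+(p^{\beta v_p(a)}-1)\bigr)=\sum_{S\subseteq\{p:p\mid a\}}\prod_{p\in S}(p^{\beta v_p(a)}-1)
\end{equation*}
and, for every $S$, decompose $a=nm$ with $m:=\prod_{p\in S} p^{v_p(a)}$ coprime to $n:=a/m$; in particular $\{p:p\mid m\}=S$.

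Applying Lemma~\ref{lem:postvenice} supplies a multiplicative majorant $H$ with $H(p^e)\leq\min\{C^e,C' p^{c_2 e/2}\}$ and gives $G(nm)\leq G(n) H(m)$. Together with $F(nm)=F(n) F(m)$ and the coprime factorization of $\prod_{c_0<p\mid nm}(1-F(p))^{-1}$, and then dropping the constraint $(n,m)=1$ in the resulting double sum (which can only enlarge it), the sum separates and after reindexing $S=\{p:p\mid m\}$ becomes $\Sigma_0\cdot T$, where $\Sigma_0$ denotes the right-hand side sum of the lemma and
\begin{equation*}
T=\prod_{p<\Psi}\Bigl(1+\epsilon_p\sum_{e\geq 1}(p^{\beta e}-1) F(p^e) H(p^e)\Bigr),\qquad \epsilon_p:=\frac{\mathbf{1}[p>c_0]}{1-F(p)}+\mathbf{1}[p\leq c_0].
\end{equation*}

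To estimate $T$, I use the telescoping identity $p^{\beta e}-1=\sum_{i=1}^{e}(p^{\beta i}-p^{\beta(i-1)})$ and reindex $j=e-i$, turning the inner sum into $\sum_{i\geq 1,\,j\geq 0} F(p^{i+j}) H(p^{i+j})(p^{\beta i}-p^{\beta(i-1)})$. With $A=C$ and $c=1$ this matches verbatim the factor in Lemma~\ref{lemshiu2}, which therefore yields $T\ll e^{\nu\beta\log\Psi}$ as soon as $\log\Psi>4\beta_0/c_2$. Choosing $\beta=\varpi/\log\Psi$ is admissible because the hypothesis $\varpi\leq\min\{(c_2/2)\log\Psi,\beta_0\}$ is precisely equivalent to $\beta\leq\min\{c_2/2,\beta_0/\log\Psi\}$, and we obtain
\begin{equation*}
\text{LHS}\leq \Upsilon^{-\beta}\Sigma_0\cdot T\ll\Sigma_0\exp\bigl(\nu\beta_0-\varpi\log\Upsilon/\log\Psi\bigr),
\end{equation*}
as required. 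The residual regime $\log\Psi\leq 4\beta_0/c_2$ is handled separately: then $T$ has boundedly many factors, each bounded explicitly in terms of $\beta_0,C,C',c_i$, so $T=O(1)$ and the same choice of $\beta$ still gives the claim.

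The principal technical step is the combinatorial expansion of $a^\beta$ together with the decomposition $a=nm$: this is precisely what lets the submultiplicative $G$ be replaced by the multiplicative majorant $H$ only on the $m$-side of the Euler product $T$, while $G$ remains intact on the $n$-side to reconstitute $\Sigma_0$. Once the expansion is set up, matching $T$ to the Euler product of Lemma~\ref{lemshiu2} via the telescoping identity is a direct computation.
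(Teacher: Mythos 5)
Your argument is essentially the paper's proof: both apply Rankin's trick with $\beta=\varpi/\log\Psi$, split off the multiplicative majorant $H$ supplied by Lemma~\ref{lem:postvenice} on the Rankin-weighted part while keeping $G$ intact on the complementary factor, and bound the resulting Euler product by Lemma~\ref{lemshiu2}; your subset expansion of $a^\beta$ plus the telescoping identity is just a reorganisation of the paper's Dirichlet convolution $m^\beta=\sum_{d\mid m}\psi_\beta(d)$ followed by its factorisation $n=n_0n_1$, and dropping the coprimality of $n$ and $m$ is harmless since all terms are non-negative. The proof is correct; your closing paragraph on the regime $\log\Psi\leq 4\beta_0/c_2$ even addresses a case the paper passes over in silence, though there the claim that each Euler factor is $O(1)$ is not automatic when $\beta$ equals $c_2/2$ exactly and $p^{c_2/2}\leq C$, since then $\sum_e p^{\beta e}F(p^e)H(p^e)$ need not converge.
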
 
 
\begin{proof} 
Define $ \beta  := \varpi /\log \Psi $. The sum is at most 
$$ \sum_{  P^+(a)<\Psi }      F(a) G(a) \prod_{\substack{ c_0<p\\ p\mid a  } } (1-F(p ) ) ^{-1} 
\left(\frac{a}{\Upsilon }\right)^\beta .$$  Now define the multiplicative function $\psi_\beta:\mathbb N\to \mathbb R$ via the Dirichet convolution 
$$m^\beta=\sum_{\substack{ d\in \mathbb N \\ d\mid m} }\psi_\beta(d), \ \ \ m \in \mathbb N.$$ Writing $n=a/d$ we obtain   $$  \Upsilon^{-\beta} 
\sum_{\substack{ d \in \mathbb N \\ P^+(d)< \Psi }}  \psi_\beta(d)
\sum_{\substack{   n \in \mathbb N \\ P^+(n)<\Psi   }} 
F(nd)      G(nd) \prod_{\substack{ c_0<p\\ p\mid nd  } } (1-F(p ) ) ^{-1}  .$$
Now factor $n=n_0 n_1 $, where $\gcd(n_1,d)=1$ and $n_0$ is only divisible by primes dividing $d$. Then the sum over $d$ and $n$ becomes 
$$ \sum_{\substack{ d \in \mathbb N \\ P^+(d)<\Psi  }}  \psi_\beta(d)
\sum_{\substack{ \mathbf n   \in \mathbb N ^2, \ P^+(n_0 n_1 )<\Psi  \\ p\mid n_0 \Rightarrow p\mid d \\ \gcd(n_1,d)=1 }} 
F(n_0 n_1 d)      G(n_0 n_1 d) \prod_{\substack{ c_0<p \\  p\mid n_0 n_1 d  } } (1-F(p ) ) ^{-1} 
.$$  Our assumptions on $G$   together with Lemma~\ref{lem:postvenice} ensure that    $G(n_0 n_1 d)     \leq G(n_1) H(n_0 d) $, where $H$ is the multiplicative function given by $ H(p^e)= \min\{C ^e, C'p^{ e c_2/2}\}$ for  $e\geq 1$ and primes $p$. Together with the multiplicativity of $F$ we obtain the   bound 
\begin{align*} 
\Upsilon^{-\beta}   
&
\sum_{\substack{   n_1  \in \mathbb N \\ P^+( n_1 )<\Psi    }}  F(n_1)  G(n_1)
\prod_{\substack{ c_0<p \\  p\mid   n_1    } } (1-F(p ) ) ^{-1} 
\\  \times & \sum_{\substack{   n_0, d    \in \mathbb N,  P^+(d  )<\Psi  \\ p\mid n_0 \Rightarrow p\mid d \\ \gcd(d, n_1)=1 }} 
F(n_0 d) 
 H(n_0 d)    \psi_\beta(d) \prod_{\substack{ c_0<p \\  p\mid     d  } } (1-F(p ) ) ^{-1} 
.\end{align*}  It is easy to see that  $ \psi_\beta( p^m ) = p^{\beta m } - p^{\beta(m-1) } $ for all $m\geq 1 $ and primes $p$.
We can use this to write  the sum over $n_0, d $    as an Euler product.
The Euler product is of the type     covered by   Lemma~\ref{lemshiu2} as can be seen by taking
$A=C, c=n_1$ and  $T=\Psi$. The assumption of the present lemma on the size of $\varpi$ implies that the assumption of  Lemma~\ref{lemshiu2} on the size of $\beta$.
Thus,   the sum over $a$ in the lemma is 
$$   \ll  \frac{  \mathrm e^{\nu \beta \log \Psi }}{\Upsilon^{ \beta} }
\sum_{\substack{   n   \in \mathbb N \\ P^+( n  )< \Psi   }}     F(n ) G(n)  \prod_{\substack{ c_0<p \\  p\mid   n    } } (1-F(p ) ) ^{-1} 
,$$  where $\nu=\nu(\beta_0, c_0,c_1,c_2, C)$ is positive and 
the implied constant depends at most on $C$ and $c_i$. 
Using the fact that $\varpi \leq \beta_0$,
we can write 
$$  \mathrm e^{\nu \beta \log \Psi }
= \mathrm e^{\nu  \varpi }=O_{\beta_0}(1).$$ Finally, 
we have $ \Upsilon^{ -\beta} = \exp(   - \varpi    \log \Upsilon /\log \Psi) $.
 \end{proof}

\begin{lemma}\label{lem:brukner8} Keep the setting of Lemma~\ref{lem:bwv1046} and define for any $V\geq 1 $ the function  $$\mathcal H (V):=  \sum_{\substack{   n   \in \mathbb N \\ P^+( n  )< V   }}  F(n )  G(n ) \prod_{\substack{ c_0<p \\  p\mid   n    } } (1-F(p ) ) ^{-1}   .$$
  For $V\geq 1 $ and $\epsilon>0$ with 
$V^{\epsilon c_2/2} >2C$ and $V^\epsilon>c_0$ 
we have 
$$
\mathcal H (V)
\ll \frac{ \mathcal H (V^\epsilon)}{\epsilon^{\nu_1}}
,$$ where $\nu_1=\nu_1(C, c_0,c_1,c_2)$ is positive and 
the implied constant depends at most on $C$, $C'$ and $c_i$.  \end{lemma}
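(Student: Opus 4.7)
The plan is to decompose each integer $n$ contributing to $\mathcal{H}(V)$ as $n = mk$ with $\gcd(m,k)=1$, $P^+(m) < V^\epsilon$, and $P^-(k)\geq V^\epsilon$, so that $k$ is supported on primes in $[V^\epsilon, V)$. Multiplicativity of $F$ gives $F(mk)=F(m)F(k)$, while Lemma~\ref{lem:postvenice} applied to the submultiplicativity hypothesis~\eqref{eGbound} yields $G(mk)\leq G(m)H(k)$, where $H$ is the multiplicative function with $H(p^e)=\min\{C^e,\, C'p^{ec_2/2}\}$. The weight $\prod_{c_0<p,\, p\mid n}(1-F(p))^{-1}$ factors over $m$ and $k$ as well, and the $m$-sum reassembles exactly to $\mathcal{H}(V^\epsilon)$. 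One is therefore reduced to proving
\[ \prod_{V^\epsilon \leq p < V}\Bigl(1 + \frac{1}{1-F(p)}\sum_{e\geq 1} F(p^e)H(p^e)\Bigr) \ll \epsilon^{-\nu_1}, \]
where the factor $1/(1-F(p))$ is legitimate because the assumption $V^\epsilon > c_0$ forces every prime $p$ in the range to satisfy $F(p)\leq c_0/p < 1$.

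The inner sum is then handled prime by prime. Using the two bounds in~\eqref{eFbound} for $F$ and the two bounds in the definition of $H$, the idea is to split the sum at a cutoff $e_\ast = e_\ast(c_1,c_2)$: for $e < e_\ast$ use $F(p^e)\leq c_0/p$ and $H(p^e)\leq C^e$, contributing $O_{C,c_1,c_2}(1/p)$; for $e\geq e_\ast$ use $F(p^e)\leq p^{c_1-ec_2}$ and $H(p^e)\leq C'p^{ec_2/2}$, giving a geometric series in $p^{-c_2/2}$ which converges thanks to $V^{\epsilon c_2/2}>2C$ and sums to $O(p^{c_1-e_\ast c_2/2})$. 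Choosing $e_\ast\geq 2(c_1+2)/c_2$ makes this tail $O(p^{-2})$, so the whole inner sum is $O(1/p)$. Combined with $1/(1-F(p))=1+O(1/p)$, each Euler factor equals $1+O(1/p)$, and Mertens' theorem yields
\[ \prod_{V^\epsilon \leq p < V}\bigl(1 + O(1/p)\bigr) \ll \left(\frac{\log V}{\log V^\epsilon}\right)^{\nu_1} = \epsilon^{-\nu_1} \]
for some positive $\nu_1=\nu_1(C,c_0,c_1,c_2)$, which is the desired bound.

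The main obstacle is the pointwise estimate $\sum_{e\geq 1}F(p^e)H(p^e) = O(1/p)$: one factor grows in $e$ while the other decays, so each must be matched against the right regime of the other, and the cutoff $e_\ast$ has to be chosen large enough that the tail decays at least like $p^{-2}$, not merely $p^{-1}$. Without this sharper tail one would pick up an extra logarithmic factor in the Mertens step, spoiling the clean $\epsilon^{-\nu_1}$ shape. Once this per-prime estimate is secured, the rest is a direct Euler product manipulation together with standard Mertens-type bounds.
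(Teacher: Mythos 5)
Your proposal is correct and follows essentially the same route as the paper: the same factorisation $n=n_0n_1$ by the prime threshold $V^\epsilon$, the same per-prime splitting of $\sum_e F(p^e)H(p^e)$ at a cutoff determined by $c_1,c_2$ to get an $O(1/p)$ Euler factor, and the same Mertens-type conclusion. The only cosmetic difference is that the paper bounds $G(n_0n_1)\leq G(n_0)C^{\Omega(n_1)}$ directly and works with $C^jF(p^j)$ (so that $\nu_1$ depends only on $C$ and the $c_i$, as stated), whereas your use of $H(p^e)\leq C'p^{ec_2/2}$ in the tail makes your $\nu_1$ depend on $C'$ as well — harmless here, but worth noting against the stated dependence.
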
 \begin{proof} 
For a prime $p>V^\epsilon$ we have  $p^{c_2/2} >2C $ due to the assumption   $V^{\epsilon c_2/2} >2C$.
Now let $j_0 :=1+[ 4/ c_2 +2c_1/ c_2] $ so that $j_0 c_2  \geq 4+2 c_1  $. Then 
   $- j_0c_2 /2 \leq  - c_1-2  $, which can be combined with $p^{c_2/2} >2C$ 
to show that $$
(   C p^{-c_2})^{j_0} 
\leq 
   p^{-j_0 c_2/2}  \leq  p^{-c_1-2} 
 . $$By~\eqref{eFbound} and the fact that $C>1$ we see that  
$$
\sum_{j=1}^\infty C^{j } F(p^j) 
\leq j_0 C^{j_0}  \frac{c_0}{p} + p^{c_1 }\sum_{j=1+j_0 }^\infty ( C  p^{- c_2})^{j}
\leq j_0 C^{j_0}  \frac{c_0}{p} + p^{c_1 } 2( C  p^{- c_2})^{j_0}
\leq j_0 C^{j_0}  \frac{c_0}{p} + \frac{2}{p^2} , $$ 
which is at most $\nu_1/p$, where $\nu_1$ is a positive constant that depends at most on $  C$ and $c_i$.
We infer that 
\begin{equation}\label{eq:montspirit}
\hspace{-0,4cm}
\prod_{p\in ( V^\epsilon, V) } \!\left( 1+ \sum_{j=1}^\infty 
\frac{ C^{j } F(p^j) }{1-F(p) } \right)\leq 
\prod_{p\in ( V^\epsilon, V)} \!\left(1+ \frac{ \nu_1/p  }{1-c_0/p } \right)\ll 
\prod_{p\in ( V^\epsilon, V)} \!\left( 1+ \frac{1}{p} \right)^{\nu_1 } \!\!\ll \frac{1}{\epsilon^{\nu_1}}
,\end{equation} with an implied constant that depends at most on  $C$ and $c_i$.

We can now use~\eqref{eq:montspirit} to bound  $\mathcal H (V)$.
Each positive integer $n$ can be written uniquely as $n=n_0 n_1$, where $P^+(n_0)\leq V^\epsilon$ and $P^-(n_1)> V^\epsilon$. 
We have $G(n_0 n_1   )    \leq G(n_0) C^{\Omega(n_1 )}$
by equation~\eqref{eGbound} and together with the 
multiplicativity of $F$ we obtain 
$$   \mathcal H (V) \leq 
\sum_{\substack{   n_0   \in \mathbb N \\ P^+( n_0  )\leq  V^\epsilon   }}  F(n_0 )  G(n_0 ) \prod_{\substack{ c_0<p \\  p\mid   n_0    } } (1-F(p ) ) ^{-1} 
\sum_{\substack{   n_1   \in \mathbb N \\ P^-( n_1  )>  V^\epsilon \\ 
P^+( n_1  ) < V}}  C^{\Omega(n_1 )}  F(n_1 ) \prod_{\substack{ c_0<p \\  p\mid   n_1    } } (1-F(p ) ) ^{-1} 
.$$ The assumption  $V^\epsilon>c_0$ shows that every prime $p>V^\epsilon$ satisfies
$p>c_0$, hence, the sum over $n_1$ equals $$\prod_{ V^\epsilon < p < V} \left( 1+ (1-F(p ) ) ^{-1} \sum_{j=1}^\infty C^{j } F(p^j) \right).$$
Alluding to~\eqref{eq:montspirit} and noting that the sum over $n_0$ equals $\mathcal H(V^\epsilon)$ concludes the proof. 
\end{proof}

\begin{lemma}
\label{lem:final}
Let $F$ be as in Lemma~\ref{lemshiu} and $G$ be as in Lemma~\ref{lem:bwv1046}. 
Fix any positive real number $\beta_0$. 
For sufficiently large $\Upsilon   , \Psi \geq  2  $ and for all $\varpi >0$ satisfying 
$$ \varpi 
 \leq  \min\left\{\frac{c_2}{2 }\log \Psi ,  \beta_0   \right\}
$$  we have  
$$ \sum_{\substack{ a>\Upsilon   \\ P^+(a)<\Psi }} F(a)  
G(a) \prod_{\substack{ c_0<p\\ p\mid a  } } (1-F(p ) ) ^{-1} 
 \ll    \exp \left(   -   \varpi \frac{\log \Upsilon }{\log \Psi}  
  \right)   \sum_{\substack{ a\leq \Psi   }} F(a)     G(a) \prod_{\substack{ c_0<p\\ p\mid a  } } (1-F(p ) ) ^{-1}
,$$ where the implied constant depends at most on   $C, \beta_0$ and $c_i$. \end{lemma}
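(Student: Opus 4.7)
The plan is to deduce Lemma~2.11 from Lemmas~2.4 and~2.5 (i.e.\ Lemma~\ref{lem:bwv1046} and Lemma~\ref{lem:brukner8}) by a simple absorption argument. First, applying Lemma~\ref{lem:bwv1046} directly to the sum on the left-hand side immediately yields the bound
$$\sum_{\substack{a>\Upsilon \\ P^+(a)<\Psi}} F(a)G(a)\prod_{\substack{c_0<p \\ p\mid a}}(1-F(p))^{-1} \ll \exp\!\left(-\varpi \frac{\log \Upsilon}{\log \Psi}\right) \mathcal H(\Psi).$$
Thus the whole problem reduces to proving that $\mathcal H(\Psi) \ll \sum_{a\leq \Psi} F(a) G(a) \prod_{c_0<p,\,p\mid a}(1-F(p))^{-1}$, with an implied constant depending only on $C,C',\beta_0,c_i$. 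This is the content of the rest of the argument.

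To bound $\mathcal H(\Psi)$ by a restricted sum I intend to use Lemma~\ref{lem:brukner8} to pass from $\mathcal H(\Psi)$ to $\mathcal H(\Psi^\epsilon)$ for a suitable small but fixed $\epsilon>0$, and then split the latter according to the size of $n$:
$$\mathcal H(\Psi^\epsilon) = \sum_{\substack{a\leq \Psi \\ P^+(a)<\Psi^\epsilon}} F(a)G(a)\prod_{\substack{c_0<p \\ p\mid a}}(1-F(p))^{-1} \;+\; \sum_{\substack{a> \Psi \\ P^+(a)<\Psi^\epsilon}} F(a)G(a)\prod_{\substack{c_0<p \\ p\mid a}}(1-F(p))^{-1}.$$
The first sum on the right is trivially bounded by the target quantity $\sum_{a\leq \Psi} F(a) G(a) \prod(1-F(p))^{-1}$. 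The second sum is exactly of the shape to which Lemma~\ref{lem:bwv1046} applies, now with $\Psi^\epsilon$ playing the role of $\Psi$ and with $\Upsilon=\Psi$. Taking the admissible choice $\varpi'=\beta_0$ (which is legal provided $\beta_0 \leq \tfrac{c_2}{2}\log(\Psi^\epsilon)$, i.e.\ $\Psi\geq \exp(2\beta_0/(\epsilon c_2))$), Lemma~\ref{lem:bwv1046} bounds the second sum by $K\, e^{-\beta_0/\epsilon}\,\mathcal H(\Psi^\epsilon)$, where $K=K(C,C',\beta_0,c_i)$.

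The crucial (but easy) step is the choice of $\epsilon$: since $K$ is a fixed constant once $\beta_0,C,C',c_i$ are fixed, one may take $\epsilon$ small enough, depending only on these same parameters, so that $Ke^{-\beta_0/\epsilon}\leq \tfrac12$. Absorbing the second sum on the left then gives $\mathcal H(\Psi^\epsilon) \leq 2\sum_{a\leq \Psi}F(a)G(a)\prod(1-F(p))^{-1}$. Applying Lemma~\ref{lem:brukner8} (whose hypotheses $\Psi^{\epsilon c_2/2}>2C$ and $\Psi^\epsilon>c_0$ are satisfied once $\Psi$ is sufficiently large, since $\epsilon$ is now fixed) yields $\mathcal H(\Psi)\ll \epsilon^{-\nu_1}\mathcal H(\Psi^\epsilon)\ll \sum_{a\leq \Psi}F(a)G(a)\prod(1-F(p))^{-1}$, with an absolute constant depending only on $C,C',\beta_0,c_i$. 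Combining with the initial application of Lemma~\ref{lem:bwv1046} produces the claimed bound.

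The only real obstacle is the cleanness of the absorption step, i.e.\ verifying that the constant $K$ in Lemma~\ref{lem:bwv1046} really is independent of $\epsilon$ (so that $\epsilon$ can be chosen after $K$ is known). Inspecting the proof of Lemma~\ref{lem:bwv1046}, the implied constant there depends on $C,C',\beta_0,c_i$ through the factor $e^{\nu\varpi'}=e^{\nu\beta_0}$ and on the estimates in Lemma~\ref{lemshiu2}, none of which involve $\epsilon$ or $\Psi$; so this is indeed fine, and the ``sufficiently large $\Psi,\Upsilon$'' hypothesis of the lemma just absorbs the threshold $\Psi\geq\exp(2\beta_0/(\epsilon c_2))$ as well as the hypotheses of Lemma~\ref{lem:brukner8}.
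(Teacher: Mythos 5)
Your proposal is correct and follows essentially the same route as the paper: a first application of Lemma~\ref{lem:bwv1046} to extract the factor $\exp(-\varpi\log\Upsilon/\log\Psi)$, then the same absorption argument (splitting $\mathcal H$ at the threshold and reapplying Lemma~\ref{lem:bwv1046} with a fixed $\epsilon$ chosen after the implied constant is known), combined with Lemma~\ref{lem:brukner8} to pass between $\mathcal H(\Psi)$ and $\mathcal H(\Psi^\epsilon)$. The only cosmetic differences are that the paper runs the absorption with the choice $\beta_0=\varpi=1$ rather than $\varpi'=\beta_0$, and performs the absorption before invoking Lemma~\ref{lem:brukner8}; neither affects the argument.
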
 

\begin{proof} 
Taking $\Psi=\Upsilon^\epsilon$ and $\beta_0  =\varpi=1  $ 
 in Lemma~\ref{lem:bwv1046} shows that  
$$ \sum_{\substack{ a>\Upsilon   \\ P^+(a)<\Upsilon^\epsilon }} F(a)  
G(a) \prod_{\substack{ c_0<p\\ p\mid a  } } (1-F(p ) ) ^{-1} 
 \ll      \frac{  \mathcal H(\Upsilon^\epsilon )  }{ \exp(  1  /\epsilon)}
,$$  where   the implied constant depends at most on $C$, $C'$ and $c_i$. Taking a sufficiently small $\epsilon=\epsilon_0$ in terms of 
$C$ and $c_i$ makes the right-hand side be $\leq  \mathcal H(\Upsilon^{\epsilon_0} ) /2$.
Furthermore, by the definition of $\mathcal H$ we have 
$$  \mathcal H(\Upsilon^{\epsilon_0} ) \leq  \sum_{\substack{ a\leq \Upsilon   \\ P^+(a)<\Upsilon^{\epsilon_0} }} F(a)  
G(a) \prod_{\substack{ c_0<p\\ p\mid a  } } (1-F(p ) ) ^{-1} 
+ \sum_{\substack{ a>\Upsilon   \\ P^+(a)<\Upsilon^{\epsilon_0} }} F(a)  
G(a) \prod_{\substack{ c_0<p\\ p\mid a  } } (1-F(p ) ) ^{-1} 
, $$thus,  $$\mathcal H(\Upsilon^{\epsilon_0} ) \leq  \sum_{\substack{ a\leq \Upsilon   }} F(a)  
G(a) \prod_{\substack{ c_0<p\\ p\mid a  } } (1-F(p ) ) ^{-1} +\frac{\mathcal H(\Upsilon^{\epsilon_0} )}{2}
.$$ Hence,  $$\mathcal H(\Upsilon^{\epsilon_0} ) \leq 2 \sum_{\substack{ a\leq \Upsilon   }} F(a)  
   G(a) \prod_{\substack{ c_0 <p\\ p\mid a  } } (1-F(p ) ) ^{-1}  .$$ Thus,   by Lemma~\ref{lem:brukner8} 
we infer that $$  \mathcal H(\Psi  )  \ll  \frac{\mathcal H(\Psi ^{\epsilon_0} )   }{\epsilon_0^{\nu_1 }}\ll
\sum_{\substack{ a\leq \Psi   }} F(a)     G(a) \prod_{\substack{ c_0<p\\ p\mid a  } } (1-F(p ) ) ^{-1} .$$
We conclude the proof by  injecting this estimate  into  Lemma~\ref{lem:bwv1046}.
 \end{proof}

\begin{lemma} 
\label{lem:damnnotfinal}
Let $F$ be as in Lemma~\ref{lemshiu} and $G$ be as in Lemma~\ref{lem:bwv1046}. 
Fix any positive constant $\gamma$ and assume that for every prime $p$ we are given a constant  $c(p) $ in the interval $[0, \gamma/p]$.
Then for all $T\geq 1 $ we have 
$$
\sum_{\substack{ a\leq T }} F(a)     G(a) \prod_{  p\mid a } (1+c(p ) )  
\leq 2^{\gamma \gamma'}
\sum_{\substack{ a\leq T  }} F(a) G(a)
,$$ where $  \gamma'=1+2(1+c_1)/c_2) c_0  C^{ 1+2(1+c_1) /c_2}+ C' (2^{c_2/2}-1)^{-1}
$. 
\end{lemma}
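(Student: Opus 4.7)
My plan is to expand the product $\prod_{p \mid a}(1+c(p))$ over squarefree divisors of $\rad(a)$ and rearrange into an Euler product. Writing $\tilde{c}(d) := \prod_{p \mid d} c(p)$ for squarefree $d$, we have
\[
\prod_{p \mid a}(1+c(p)) = \sum_{\substack{d \text{ squarefree} \\ p \mid d \,\Rightarrow\, p \mid a}} \tilde{c}(d),
\]
so after interchanging summation the left-hand side becomes $\sum_{d} \tilde{c}(d) S_d(T)$, where $S_d(T) := \sum_{a \leq T,\, p \mid d \Rightarrow p \mid a} F(a) G(a)$.

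For each squarefree $d$ I factor every admissible $a$ uniquely as $a = uv$ with $\rad(u) = d$ and $\gcd(v,d)=1$. Multiplicativity of $F$ gives $F(a)=F(u)F(v)$, and Lemma~\ref{lem:postvenice} applied with $\epsilon = c_2/2$ to the submultiplicative estimate on $G$ hypothesised in Lemma~\ref{lem:bwv1046} gives $G(a) = G(vu) \leq G(v) H(u)$, where $H$ is the multiplicative function with $H(p^e) = \min\{C^e,\, C' p^{ec_2/2}\}$. Discarding the constraints $v \leq T/u$ and $\gcd(v,d)=1$ then yields
\[
S_d(T) \leq \Bigl(\sum_{v \leq T} F(v) G(v)\Bigr) \sum_{\rad(u) = d} F(u) H(u).
\]
Summing over $d$ collapses the double sum into $\sum_{u \geq 1} F(u) H(u) \prod_{p \mid u} c(p)$, which by multiplicativity factors as an Euler product
\[
\prod_p \Bigl(1 + c(p) \sum_{e \geq 1} F(p^e) H(p^e)\Bigr).
\]

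The remaining task is the local computation showing $\sum_{e \geq 1} F(p^e) H(p^e) \leq \gamma'/p$ for the $\gamma'$ in the statement. I would split at the threshold $E := 1 + 2(1+c_1)/c_2$: for $e \leq E$ use $F(p^e) \leq c_0/p$ from Lemma~\ref{lemshiu} together with $H(p^e) \leq C^e$, which contributes at most $E c_0 C^E/p$; for $e > E$ use $F(p^e) \leq p^{c_1-ec_2}$ and $H(p^e) \leq C' p^{ec_2/2}$, giving a geometric series of ratio $p^{-c_2/2}$ whose sum, thanks to the choice of $E$ that forces $p^{c_1-(E+1)c_2/2} \leq p^{-1}$, is controlled by $C'/((2^{c_2/2}-1)p)$. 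Inserting $c(p) \leq \gamma/p$ bounds the Euler product by $\prod_p (1 + \gamma \gamma'/p^2)$; taking logarithms, using $\log(1+x)\leq x$ and $\sum_p p^{-2} < \log 2$, gives the target bound $2^{\gamma \gamma'}$, and combining with $\sum_{v \leq T} F(v)G(v) \leq \sum_{a \leq T} F(a)G(a)$ concludes the argument. The only delicate point is the bookkeeping around $E$: it must be chosen large enough so that the tail telescopes into a bound with precisely the shape of $\gamma'$ declared in the statement, which is a pure constant-chase with no conceptual obstacle.
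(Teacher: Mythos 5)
Your proposal is correct and follows essentially the same route as the paper: expand $\prod_{p\mid a}(1+c(p))$ over squarefree divisors, split $a$ into the part supported on $d$ and the coprime part, apply Lemma~\ref{lem:postvenice} with $\epsilon=c_2/2$ to control $G$ by the multiplicative majorant $H$, and bound the resulting Euler product by splitting the exponent sum at a threshold of size $1+2(1+c_1)/c_2$, exactly as the paper does with its integer $\mathcal B$. The final step $\prod_p(1+\gamma\gamma'/p^2)\leq 2^{\gamma\gamma'}$ matches the paper's $\zeta(2)^{\mathcal D'}\leq 2^{\mathcal D'}$.
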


\begin{proof} 
Extending multiplicatively the function $c$ to positive     square-free integers  
we   get $$\prod_{\substack{   p\mid a  } } (1+c(p ) )  
= \sum_{\substack{ d\mid a    } } \mu(d)^2  c(d).$$   This  turns  the sum in the lemma into 
$$\sum_{\substack{ a\leq T }} F(a)     G(a)\sum_{\substack{ d\mid a    } } \mu(d)^2c(d)
=\sum_{\substack{ bd\leq T    }}  \mu(d)^2 c(d)  F(bd )     G(bd )  .$$
By assumption there exists $C'$ such that $G(ab ) \leq G(a) \min \{C^{\Omega(b)}, C' b^{c_2/2}\}$.
By Lemma~\ref{lem:postvenice} with $\epsilon=c_2/2$ we see that $G(n'n)\leq G(n') H(n)$ for all coprime $n,n'$, where 
$H$ is the multiplicative function given by $ H(p^e)= \min\{C^e, C'p^{e c_2/2}\}$ for  $e\geq 1$ and primes $p$. 
We factor $b=b_0b_1$, where $b_1$ is coprime to $d$ and each prime divisor of $b_0$ divides $d$. Thus,
$$  F(bd )     G(bd )=  F(b_0 b_1 d )     G(b_0 b_1 d )\leq  F(b_0  d ) F(b_1 )  
H(b_0 d)  G(  b_1   ),$$ hence, the sum is $$\leq \sum_{b_1\leq T}  F(b_1) G(b_1) 
\sum_{\substack{b_0 d \leq T/b_1 \\ p\mid b_0 \Rightarrow p\mid d \\ \gcd(b_1, d) = 1}}  
\mu(d)^2 c(d) F(b_0 d) H(b_0 d).$$ We will show that the inner double sum over $b_0$ and $d$ converges, and we will also upper bound the value that it attains. Dropping the condition $b_0d\leq T/b_1$ we can write it as $\prod_{p} (1+\mathcal E_p)$, where 
$$ \mathcal E_p= \sum_{\substack{  \beta,  \delta \in \Z\cap [0,\infty )   \\  (\beta , \delta) \neq (0,0)\\\beta >0 \Rightarrow \delta>0} }
\mu(p^\delta )^2  c(p^\delta )   F(p^{\beta+\delta } )  H(  p^{\beta +\delta } )
=  c(p) \sum_{  \beta \geq  0  }     F(p^{\beta+1 } )   H(  p^{\beta + 1} )
$$and  the product is taken over all primes $p\nmid d $. Let $\mathcal B$ be
the least integer satisfying $ 2(1+c_1) \leq (\mathcal B+1) c_2    $. To estimate the contribution of $\beta \leq \mathcal B$ 
we use    $c(p  ) \leq  \gamma/p$ to get  $$ \sum_{ 0\leq  \beta\leq \mathcal B   } c(p  )   F(p^{\beta+1 } )  H(  p^{\beta + 1} )
\leq \frac{\gamma}{p}  \sum_{ 0\leq  \beta\leq \mathcal B   }   F(p^{\beta+1 } )   C^{\beta +1} 
\leq  \frac{\gamma (\mathcal B+1) c_0  C^{ 1+\mathcal B} }{p^{2} } .$$ To bound the contribution of the remaining terms 
we use  $F(p^e) \leq p^{c_1-ec_2}$ to get $$ \sum_{    \beta\geq 1+ \mathcal B   } c(p  )   F(p^{\beta+1 } )  H(  p^{\beta + 1} ) 
\leq C' \gamma  p^{-1+c_1} \sum_{    \beta\geq 1+ \mathcal B   }      p^{-(\beta+1) c_2/2}.$$ This is at most $$
 C' \gamma  p^{-1+c_1} p^{-(\mathcal B+2) c_2/2}\sum_{    \beta\geq 0 }      2^{-(\beta+1) c_2/2}
=   \frac{C' \gamma (2^{c_2/2}-1)^{-1}}{p^{1-c_1+(\mathcal B+2) c_2/2 }}
 .$$ The exponent of $p$ in the right-hand side is strictly larger than $ 2 $ 
owing to our definition of $\mathcal B$. We have thus shown that 
for all primes $p$   one has $0\leq \mathcal E_p \leq \mathcal B' p^{-2}$, where 
$$\mathcal B': = \gamma (\mathcal B + 1) c_0  C^{ 1+\mathcal B} + C' \gamma (2^{c_2/2}-1)^{-1}.$$
By the definition of $\mathcal B$ we have $2(1+c_1) > \mathcal B  c_2  $, hence, $\mathcal B'\leq \mathcal D'$, where 
$$ \mathcal D' := \gamma (1+2(1+c_1)/c_2) c_0  C^{ 1+2(1+c_1) /c_2}+\gamma C' (2^{c_2/2}-1)^{-1},$$
hence, $\prod_{p}(1+\mathcal E_p) \leq
\prod_{p}(1+p^{-2 \mathcal D'} )\leq
 \prod_{p} (1+p^{-2} )^{\mathcal D'}\leq \zeta(2)^{\mathcal D'} \leq 2^{\mathcal{D}'}$.
\end{proof}

\begin{lemma} 
\label{lem-avergg} 
Fix   a positive constant $\alpha_1$   and let  $g:\mathbb N \to [0,\infty)$ be a multiplicative function for which 
$g(p) \leq \alpha_1/p$ for all primes $p$. Then for all $a\in \mathbb N , \alpha_2,\alpha_3>0$ and $x\geq 2$ we have 
$$ 
\sum_{\substack{ m\in \mathbb N, \gcd(m,a)=1 \\p\mid m \Rightarrow p\in (\alpha_1, x^{\alpha_2})}  }  
 \mu(m)^2 g(m )  \prod_{\substack{ \alpha_1< p\leq x^{\alpha_3} \\  p\nmid am}  } (1-g(p) )^2 \ll
 \mathcal C \prod_{\substack{ \alpha_1<  p\leq x^{\min\{\alpha_2,\alpha_3\}} \\  p\nmid a }  } (1-g(p) ) ,$$ where
$$ \mathcal C:=\mathds 1 [ \alpha_2 \leq \alpha_3]  \prod_{\substack{ x^{\alpha_2}<  p\leq x^{\alpha_3} \\  p\nmid a }  } (1-g(p) )^2 +
\mathds 1 [ \alpha_2 > \alpha_3]  \prod_{\substack{ x^{\alpha_3}<  p\leq x^{\alpha_2} \\  p\nmid a }  } (1-g(p) )^{-1} $$ and 
 the implied constant depends  on   $\alpha_1$ but is independent of 
$a, \alpha_2, \alpha_3$ and $x$. 
\end{lemma}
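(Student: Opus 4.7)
The strategy is to evaluate the sum over $m$ as an Euler product and compare the resulting expression against the right-hand side. Write $y:=x^{\alpha_2}$ and $z:=x^{\alpha_3}$. Since $\mu(m)^2$ forces $m$ to be squarefree, the inner product admits the decomposition
$$\prod_{\substack{\alpha_1<p\leq z\\p\nmid am}}(1-g(p))^2=\prod_{\substack{\alpha_1<p\leq z\\p\nmid a}}(1-g(p))^2\cdot\prod_{\substack{p\mid m\\p\leq z}}(1-g(p))^{-2},$$
and pulling the $m$-independent factor outside reduces the sum to an Euler product over the primes that actually contribute to $m$.

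In the case $\alpha_2\leq\alpha_3$, all prime factors of $m$ lie in the product range $(\alpha_1,z]$, so the Euler product equals $\prod_{\alpha_1<p<y,\,p\nmid a}(1+g(p)(1-g(p))^{-2})$. Combining with the pulled-out factor and using $(1-g(p))^2+g(p)=1-g(p)+g(p)^2$ rewrites the left-hand side as
$$\prod_{\substack{y\leq p\leq z\\p\nmid a}}(1-g(p))^2\cdot\prod_{\substack{\alpha_1<p<y\\p\nmid a}}(1-g(p)+g(p)^2).$$
In the opposite case $\alpha_2>\alpha_3$, I would factor $m=m_1m_2$ according to whether the prime divisors are $\leq z$ or in $(z,y)$; the two resulting sums separate since $m_1$ and $m_2$ have disjoint prime supports, and the analogous Euler product calculation produces
$$\prod_{\substack{\alpha_1<p\leq z\\p\nmid a}}(1-g(p)+g(p)^2)\cdot\prod_{\substack{z<p<y\\p\nmid a}}(1+g(p)).$$

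Comparison with $\mathcal{C}\prod_{\alpha_1<p\leq x^{\min\{\alpha_2,\alpha_3\}},\,p\nmid a}(1-g(p))$ then reduces (up to the isolated prime $p=y$, whose factor is absorbed in the implied constant) to two ingredients. For primes appearing on both sides of the ratio one must prove
$$\prod_{\alpha_1<p,\,p\nmid a}\frac{1-g(p)+g(p)^2}{1-g(p)}=\prod_{\alpha_1<p,\,p\nmid a}\left(1+\frac{g(p)^2}{1-g(p)}\right)\ll_{\alpha_1}1.$$
In the case $\alpha_2>\alpha_3$ one additionally uses the pointwise inequality $(1+g(p))(1-g(p))=1-g(p)^2\leq 1$, which rearranges to $(1+g(p))\leq(1-g(p))^{-1}$ and thereby controls the extra factor against the corresponding term of $\mathcal{C}$ on the right-hand side.

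The main obstacle is the uniform bound on the displayed infinite product. The hypothesis $g(p)\leq\alpha_1/p$ does not by itself force $1-g(p)$ to be bounded below over all primes $p>\alpha_1$, since for primes only slightly larger than $\alpha_1$ the value $g(p)$ may be arbitrarily close to $1$. I would therefore split the range at $2\alpha_1$: for $p>2\alpha_1$ one has $g(p)\leq 1/2$, whence $g(p)^2/(1-g(p))\leq 2\alpha_1^2/p^2$, giving a convergent tail; for the finitely many primes in $(\alpha_1,2\alpha_1]$, monotonicity of $t\mapsto t^2/(1-t)$ on $[0,1)$ together with the strict inequality $g(p)\leq\alpha_1/p<1$ bounds each factor by a constant depending only on $\alpha_1$, so the product over this finite set is $O_{\alpha_1}(1)$. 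Assembling the two bounds closes the argument.
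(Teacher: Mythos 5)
Your proposal is correct and follows essentially the same route as the paper: expand the sum as an Euler product, rewrite each factor via $(1-g(p))^2\bigl(1+g(p)(1-g(p))^{-2}\bigr)=1-g(p)+g(p)^2$ and $(1+g(p))\leq(1-g(p))^{-1}$, and control $\prod_p\bigl(1+g(p)^2/(1-g(p))\bigr)$ using $g(p)\leq\alpha_1/p$. Your explicit treatment of the finitely many primes in $(\alpha_1,2\alpha_1]$ is in fact slightly more careful than the paper's one-line appeal to absolute convergence of $\prod_p(1+O(g(p)^2))$.
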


\begin{proof} 
Let $\mathcal P$ be the product of all primes in $(\alpha_1, x^{\alpha_2})$ that do not divide $a$. Using that $g$ is multiplicative and $g\geq 0$ we see that the sum over $m$ is 
\begin{align*} 
\sum_{m\mid \mathcal P  }   g(m )  & \prod_{\substack{ \alpha_1<  p\leq x^{\alpha_3} \\  p\nmid am}  } (1-g(p) )^2
= \prod_{\substack{ \alpha_1<  p\leq x^{\alpha_3} \\  p\nmid a }  } (1-g(p) )^2
\sum_{m\mid \mathcal P  }    g(m )\prod_{\substack{ \alpha_1<  p\leq x^{\alpha_3} \\    p\mid m }  } (1-g(p) )^{-2}
\\=  &\prod_{\substack{ \alpha_1<  p\leq x^{\alpha_3} \\  p\nmid a }  } (1-g(p) )^2
\prod_{\substack{ \alpha_1<  p\leq x^{\alpha_2} \\  p\nmid a, p> x^{\alpha_3} }  } (1+g(p) ) 
\prod_{\substack{ \alpha_1<  p\leq x^{\alpha_2} \\  p\nmid a, p\leq x^{\alpha_3} }  }    \left( 1+\frac{g(p)}{(1-g(p))^2} \right).
\end{align*}  
The assumption $g(p)\leq \alpha_1/p$  implies that $g(p)<1$ whenever $p> \alpha_1$, thus, 
we can use the approximations $$1+\epsilon=(1-\epsilon)^{-1} (1+O(\epsilon^2)), \quad 
 \left(1+ \frac{\epsilon}{(1-\epsilon)^2}\right)
=(1-\epsilon)^{-1} (1+O(\epsilon^2))$$ with $\epsilon=g(p)$ respectively in the second and third product. 
This will produce   $$  \ll  \prod_{\substack{ \alpha_1<  p\leq x^{\alpha_3} \\  p\nmid a }  } (1-g(p) )^2
  \prod_{\substack{  \alpha_1<  p\leq x^{\alpha_2} \\  p\nmid a }   } (1-g(p) )^{-1} 
\ll  \mathcal C \prod_{\substack{ \alpha_1<  p\leq x^{\min\{\alpha_2,\alpha_3\}} \\  p\nmid a }  } (1-g(p) ) $$ with an implied constants that depend at most on $\alpha_1$. This is because $\prod_p (1+O(g(p)^2 ) ) $ converges absolutely  due to the assumption $g(p)=O(1/p)$. 
\end{proof}

Let us recall a special case of~\cite[Lemma~6.3]{MR2061214} here:
\begin{lemma}
[Fundamental lemma of Sieve Theory]
\label{lem-fund-sieb} Let $\kappa>0, y>1$. There exists a sequence of real numbers $(\lambda_m^+)$ depending only on $\kappa$ and $y$
with the following properties: 
\begin{align}
\label{eq:siev1}\lambda_1^+&=1, \\ 
\label{eq:siev2}| \lambda_m^+|&\leq 1\ \textrm{ if } 1<m<y \\ 
\label{eq:siev3} \lambda_m^+&=0 \ \textrm{ if } m\geq y, 
\end{align}  and for any integer $n>1$,
\begin{equation}\label{eq:siev4}  0\leq \sum_{m\mid n } \lambda_m^+. \end{equation} Moreover, for any multiplicative function $f(m)$
with $0\leq f(p)<1$  and satisfying \begin{equation}\label{eq:sievdim}\prod_{w\leq p< z}(1-f(p))^{-1} 
\leq \left(\frac{\log z}{\log w}\right)^\kappa \left(1+\frac{K}{\log w}\right)\end{equation} for all $2\leq w < z\leq y $ we have 
\begin{equation}\label{eq:siev4bf7} \sum_{\substack{ m\mid P(z) }}\lambda_m^+f(m)=\left(
1+O\left(\mathrm e^{-\sigma} \left(1+\frac{K}{\log z}\right)^{10} \right)
\right)\prod_{p\leq z}(1-f(p) ),\end{equation} where $P(z)$ is the product of all primes $p\leq z$ and $\sigma=\log y/\log z \geq 1$, the implied constant depending only on $\kappa$.
\end{lemma}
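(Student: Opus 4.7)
The plan is to construct explicit upper-bound sieve weights $(\lambda_m^+)$ via the combinatorial $\beta$-sieve of Rosser--Iwaniec, with the parameter $\beta$ chosen as a function of the sieve dimension $\kappa$ to guarantee convergence. Concretely, I would set $\lambda_m^+ = \mu(m)$ on squarefree $m = p_1 p_2 \cdots p_r$ with $p_1 > p_2 > \cdots > p_r$ satisfying the truncation condition $p_1 \cdots p_{2j-1}\,p_{2j-1}^\beta \leq y$ for every $j$ with $2j-1 \leq r$, and $\lambda_m^+ = 0$ otherwise. Property~\eqref{eq:siev1} holds via the empty product convention, \eqref{eq:siev2} follows from $|\mu| \leq 1$, and \eqref{eq:siev3} follows because the first truncation condition forces $m \leq y$.

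For the combinatorial positivity~\eqref{eq:siev4}, I would iterate Buchstab's identity applied to the indicator $\mathbf{1}[\gcd(n, P(z)) = 1] = \sum_{d \mid \gcd(n, P(z))} \mu(d)$. Each iteration peels off a term of one sign and leaves a remainder of the opposite sign; truncating whenever the running product $p_1 \cdots p_{2j-1}\,p_{2j-1}^\beta$ first exceeds $y$, and only after an even step, yields a lower bound for the indicator whose underlying weights coincide with the $\lambda_m^+$ defined above. That lower bound is precisely~\eqref{eq:siev4}.

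For the main estimate~\eqref{eq:siev4bf7}, I would apply the same branching decomposition to $\sum_{m \mid P(z)} \mu(m) f(m) = \prod_{p \leq z}(1 - f(p))$. This expresses the difference $\sum_{m \mid P(z)} \lambda_m^+ f(m) - \prod_{p \leq z}(1 - f(p))$ as a sum over discarded branches, each factoring as $f(p_1) \cdots f(p_{2j-1}) \prod_{p < p_{2j-1}}(1 - f(p))$ along chains of primes satisfying the boundary condition $p_1 \cdots p_{2j-1}\,p_{2j-1}^\beta > y$. Using the dimension hypothesis~\eqref{eq:sievdim} to bound the resulting sums over prime chains produces the geometric saving $\mathrm{e}^{-\sigma}$, while the slack in~\eqref{eq:sievdim} contributes the factor $(1 + K/\log z)^{10}$.

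The main obstacle is the delicate combinatorial bookkeeping required to convert the branching Buchstab expansion into a clean exponential decay in $\sigma = \log y / \log z$ with explicit constants, including the correct power of $(1 + K/\log z)$. Since this optimisation is carried out in full generality in~\cite[Lemma~6.3]{MR2061214}, I would appeal to that reference for the final quantitative step rather than redo the tree analysis here.
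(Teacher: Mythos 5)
The paper gives no proof of this lemma: it is stated verbatim as a special case of \cite[Lemma~6.3]{MR2061214}, which is exactly the reference you fall back on for the final quantitative step, so your proposal matches the paper's approach (with extra, correct detail on the standard Rosser--Iwaniec $\beta$-sieve construction inside that black box). One small quibble: the support bound $m<y$ follows from the \emph{last} applicable truncation condition (at index $r$ or $r-1$), not the first.
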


\begin{lemma}
\label{lem-sivval}Let $g:\mathbb N\to [0,1)$ be as in Lemma~\ref{lem-avergg} and assume that 
there exist constants $\alpha_2, \alpha_3$ such that 
$$\prod_{w\leq p< z}(1-g(p))^{-1} 
\leq \left(\frac{\log z}{\log w}\right)^{\alpha_2} \left(1+\frac{\alpha_3}{\log w}\right)$$
 for all $2\leq w < z $. Fix any   constants $\xi_1,\xi_2 \in (0,1) $, $\Lambda_1, \Lambda_2>0$ and assume  that we are given a finite set of non-zero integers $\mathcal S=\{s_1,\ldots, s_N\}$
and a set of non-negative real numbers $x, a_1, \ldots, a_N$  
 such that  for all $d\leq x^{\xi_1}$  
one has \[ \sum_{\substack{1\leq n \leq N  \\  d\mid s_n } } a_n
 =g(d) x ( 1+\epsilon_1 )+\epsilon_2 ,\]
where $\epsilon_i$ are real numbers that satisfy 
 \[|\epsilon_1 |  \leq \Lambda_1 \prod_{\substack{ \alpha_1<p\leq x \\  p\nmid d  }} (1-g(p) )^2 
\ \ \mathrm{ and } \  \ |\epsilon_2  | \leq \Lambda_2 x^{1-\xi_2}.\]Fix any constants $\xi_3 \in (0,\xi_1)$ and $ \xi_4>0 $,
let $\Gamma= \max\{1/\xi_4, 1/(\xi_1-\xi_3),1/\xi_2\}$ and assume that $\log x> 4 \alpha_3 \Gamma$.

Then, for all $b\in \mathbb N$ satisfying  $b\leq x^{\xi_3} $   
we have  $$ 
\sum_{\substack{1\leq n \leq N, b\mid s_n   \\ p\leq x^{\xi_4} \  \mathrm{ and } \   p\nmid b \Rightarrow p\nmid s_n}} \hspace{-0,4cm} a_n 
\leq \mathcal C_0  \Big(\Gamma^{\alpha_2} x g(b) \prod_{\substack{ p\leq x\\p\nmid b }} (1-g(p ) ) +x^{1-\xi_2/2} \Big)
,$$ where   $\mathcal C_0 $ is a positive constant that is independent of $b, x$ and $\xi_4$.
\end{lemma}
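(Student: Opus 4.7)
The plan is to apply the fundamental lemma of sieve theory (Lemma~\ref{lem-fund-sieb}) to detect the support condition ``$p\leq x^{\xi_4}$, $p\nmid b\Rightarrow p\nmid s_n$'', use the equidistribution hypothesis to evaluate the resulting level-of-distribution sums, and control the sieve error via Lemma~\ref{lem-avergg}.

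The key point is the choice $y=z=x^{1/(2\Gamma)}$. This is essentially forced by three constraints, all of which are safely satisfied because $\Gamma\geq\max\{1/\xi_4,\,1/(\xi_1-\xi_3),\,1/\xi_2\}$: (i) $z\leq x^{\xi_4}$, so that sifting only by primes $\leq z$ still yields an upper bound on the original sum (we are only weakening the support condition); (ii) $y\leq x^{\xi_1-\xi_3}$, so that $bm\leq x^{\xi_1}$ for every $m$ in the sieve support and the equidistribution hypothesis applies; (iii) $y\leq x^{\xi_2/2}$, so that the accumulated $\epsilon_2$-error is $\ll x^{1-\xi_2/2}$. The sieve itself is to be applied to the restricted sifting set $\mathcal{P}^*:=\{p\leq z:p\nmid b\}$, which inherits the dimension hypothesis from $g$ because restricting a product of $(1-g(p))^{-1}$ factors to a subset of primes only decreases it. Writing $P^*(z):=\prod_{p\in\mathcal{P}^*}p$ and denoting the resulting sieve weights by $\lambda_m^+$, the upper-bound sieve inequality together with $\gcd(b,m)=1$ throughout the support of $\lambda_m^+$ gives
\[
\sum_{\substack{1\leq n\leq N,\,b\mid s_n\\ p\leq x^{\xi_4},\,p\nmid b\Rightarrow p\nmid s_n}}\!\!\!a_n \;\leq\; \sum_{m\mid P^*(z)}\lambda_m^+\!\sum_{\substack{1\leq n\leq N\\ bm\mid s_n}}\!a_n,
\]
and plugging in the equidistribution hypothesis (valid since $bm\leq x^{\xi_3+1/(2\Gamma)}\leq x^{\xi_1}$) decomposes the right-hand side into a main term plus two errors.

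The main term $g(b)x\sum_{m\mid P^*(z)}\lambda_m^+ g(m)$ equals $O\bigl(g(b)x\prod_{p\leq z,\,p\nmid b}(1-g(p))\bigr)$ by Lemma~\ref{lem-fund-sieb} with $\sigma=\log y/\log z=1$; the dimension hypothesis together with the assumption $\log x>4\alpha_3\Gamma$ then converts this into $O\bigl(\Gamma^{\alpha_2}g(b)x\prod_{p\leq x,\,p\nmid b}(1-g(p))\bigr)$, which matches the target. The $\epsilon_1$-error is bounded, using $|\lambda_m^+|\leq 1$, by $g(b)x\Lambda_1\sum g(m)\prod_{\alpha_1<p\leq x,\,p\nmid bm}(1-g(p))^2$, the sum being over squarefree $m$ coprime to $b$ with $P^+(m)\leq z$; applying Lemma~\ref{lem-avergg} with its $\alpha_2$ and $\alpha_3$ set to $1/(2\Gamma)$ and $1$ respectively and $a=b$, then using $(1-g(p))^2\leq(1-g(p))$ to telescope the two resulting products and absorbing the finite contribution of primes $p\leq\alpha_1$ into a constant, bounds this error by $O\bigl(g(b)x\prod_{p\leq x,\,p\nmid b}(1-g(p))\bigr)$, which is in turn absorbed into the main term. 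Finally, the $\epsilon_2$-error is trivially bounded by $y\Lambda_2 x^{1-\xi_2}\leq \Lambda_2 x^{1-\xi_2/2}$.

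The main obstacle is the balancing act in the choice of $(y,z)$: the three constraints from the sieve support, the equidistribution level, and the $\epsilon_2$-error each become tight at a different one of $\xi_4,\xi_1-\xi_3,\xi_2$, and the unified choice $y=z=x^{1/(2\Gamma)}$ is what makes all of them simultaneously hold with room to spare. This same choice is also the source of the factor $\Gamma^{\alpha_2}$ in the final bound, since it is the price paid by the dimension hypothesis for extending the truncated product $\prod_{p\leq z,\,p\nmid b}(1-g(p))$ up to $p\leq x$.
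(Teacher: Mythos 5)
Your proposal is correct and follows essentially the same route as the paper: an upper-bound sieve from Lemma~\ref{lem-fund-sieb} restricted to primes coprime to $b$, the level-of-distribution hypothesis applied to the moduli $bm\leq x^{\xi_1}$, Lemma~\ref{lem-avergg} to tame the $\epsilon_1$-error, a trivial bound on the $\epsilon_2$-error, and a final extension of the product from $p\leq z$ to $p\leq x$ that produces the $\Gamma^{\alpha_2}$ factor. The only (immaterial) difference is your choice $y=z=x^{1/(2\Gamma)}$ with $\sigma=1$, versus the paper's $z=x^{\gamma}$ with $\gamma=\min\{(\xi_1-\xi_3)/2,\xi_2/4,\xi_4\}$ and $y=x^{\min\{\xi_1-\xi_3,\xi_2/2\}}$; both satisfy the three constraints you identify, and the hypothesis $\log x>4\alpha_3\Gamma$ keeps the sieve error factor $O(1)$ in either case.
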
  

\begin{proof}  
Let $ \gamma=\min\{(\xi_1-\xi_3)/2,\xi_2/4 ,\xi_4 \}$.
We     employ  Lemma~\ref{lem-fund-sieb}   with 
$$ \kappa=\alpha_2, K=\alpha_3, y=x^{\min\{\xi_1-\xi_3,\xi_2/2\}}, f(p)=g(p) \mathds 1[p>\alpha_1 \ \& \ p\nmid b], z=x^\gamma,$$
where $\alpha_1$ is as in Lemma~\ref{lem-avergg}. To verify~\eqref{eq:sievdim}  we   note that  for  all $\alpha _1 < w' < z' $ one has 
$$ 
\prod_{w'\leq p< z'} \frac{1}{ 1-f(p) } =\prod_{\substack{ w'\leq p< z' \\ p\nmid b}}  \frac{1}{ 1-g(p) } \leq \prod_{ w'\leq p< z'   }  \frac{1}{ 1-g(p) } \leq 
\left(\frac{\log z'}{\log w'}\right)^{\!\alpha_2} \left(1+\frac{\alpha_3}{\log w'}\right).
$$  Define $\mathcal P$ to be the product of all primes $p\in (\alpha_1,z]$ that do not divide $b$. Then the cardinality in the lemma is bounded by
 $$
\sum_{\substack{1\leq n \leq N, b\mid s_n   \\  \gcd(s_n, \mathcal P )=1}}  a_n  
=
\sum_{\substack{1\leq n \leq N\\ b\mid s_n    }}   a_n \sum_{\substack{m\mid s_n \\  m\mid \mathcal P }} \mu(m)
 \leq 
\sum_{\substack{1\leq n \leq N\\ b\mid s_n    }}   a_n \sum_{\substack{m\mid s_n \\  m\mid \mathcal P }}  \lambda_m^+
= 
\sum_{m\mid \mathcal P } \lambda_m^+
\sum_{\substack{1\leq n \leq N\\ bm \mid s_n    }}   a_n 
 ,$$
where we used~\eqref{eq:siev1} and~\eqref{eq:siev4} in the     inequality.
By~\eqref{eq:siev2} the only $m$ that contribute must satisfy  $ bm \leq b y \leq  b x^{\xi_1-\xi_3}\leq x^{\xi_1}$. 
This allows us to use the assumption, thus,  
$$\sum_{m\mid \mathcal P } \lambda_m^+
\sum_{\substack{1\leq n \leq N\\ bm \mid s_n    }}   a_n 
=
xg(b)\sum_{m\mid \mathcal P} \lambda_m^+g(m)   + \epsilon_3 + \epsilon_4,$$
where we used~\eqref{eq:siev2} and the coprimality of  $b$ and $m$, and the $\epsilon_i$ are real numbers that satisfy 
$$ 
|\epsilon_3| \leq \Lambda_2 
 y x^{1-\xi_2}, \quad |\epsilon_4| \leq \Lambda_1  
x g(b) 
  \sum_{m\mid \mathcal P  }    g(m )  \prod_{\substack{ \alpha_1<p\leq x \\  p\nmid bm } } (1-g(p) )^2.
$$  
Our choice of $y$ makes sure that 
$y x^{1-\xi_2} \leq x^{1-\xi_2/2}$, which is acceptable. Note that $\xi_2<1$ hence $\gamma<1$. Thus, when applying 
  Lemma~\ref{lem-avergg} with $\alpha_2=\gamma, \alpha_3=1$ 
one sees that the factor $\mathcal C$ appearing in the lemma is at most $1$. 
This leads to the bound 
\[\leq \Lambda_3 \Big( xg(b) \big| \sum_{m\mid \mathcal P} \lambda_m^+g(m)  \big|  + x^{1-\xi_2/2} + xg(b)
  \prod_{\substack{\alpha_1< p \leq x^\gamma  \\p\nmid b  }}(1-g(p)  
\Big),\]  for some  positive real number $\Lambda_3$  that  is independent of $b,x$ and $\xi_4$.
Note that $g(m)=f(m)$ for all $m \mid \mathcal P$, thus, by~\eqref{eq:siev4bf7} we obtain 
$$
\big|\sum_{m\mid \mathcal P} \lambda_m^+g(m)\big|=\big|\sum_{m\mid \mathcal P} \lambda_m^+f(m) 
\big|\leq \Lambda_4 \prod_{\substack{ \alpha_1 < p \leq z  }} (1-f(p ) ) 
= \Lambda_4 \prod_{\substack{ \alpha_1 < p \leq x^\gamma \\ p\nmid b  }} (1-g(p ) ),
$$  
for some  positive real number $\Lambda_4$  that  is independent of $b,x$ and $\xi_4$.
We have so far obtained the bound $$\Lambda_5 \Big( x g(b) \prod_{\substack{ \alpha_1< p\leq x^\gamma \\p\nmid b }} (1-g(p ) ) +x^{1-\xi_2/2}
\Big)$$ 
for some  positive real number $\Lambda_5$  that  is independent of $b,x$ and $\xi_4$. It 
 remains to upper-bound the product over $p$. For this, we write 
$$
 \prod_{\substack{ \alpha_1< p\leq x^\gamma \\p\nmid b }} (1-g(p ) )  
\leq  \prod_{\substack{ \alpha_1< p\leq x  \\p\nmid b }} (1-g(p ) ) 
 \prod_{\substack{  x^\gamma< p \leq x    }} (1-g(p ) ) ^{-1} 
  $$ and use our assumptions to upper-bound it by 
 $$
\leq 
 \prod_{\substack{\alpha_1<  p\leq x  \\p\nmid b }} (1-g(p ) ) 
\left(\frac{\log x}{\log x^\gamma}\right)^{\alpha_2} \left(1+\frac{\alpha_3}{\gamma\log x }\right)
\leq \Lambda_6 \gamma^{-\alpha_2} \prod_{\substack{ \alpha_1< p\leq x  \\p\nmid b }} (1-g(p ) )
,$$ whenever $\log x> \alpha_3/\gamma$
and where $\Lambda_6$ is  a  positive real number  that  is independent of $b,x$ and $\xi_4$.
To conclude the proof note that 
$1/\gamma \leq 4 \Gamma$, hence, 
$ \gamma^{-\alpha_2}\leq (4\Gamma)^{\alpha_2} $
and $\log x> \alpha_3/\gamma$ due to $\log x> 4 \Gamma \alpha_3$.  
\end{proof}

\begin{lemma}\label{lem:lord of all fevers and plague}Fix any positive $c_0,c_1,c_2$, assume that 
  $F$ is as in Lemma~\ref{lemshiu} and that there exists $c_3\geq 0 $ such that for all primes $p$ and integers $e\geq 2 $ we have $F(p^e) \leq c_3/p^2$.
Fix any $C,C'>0$ and 
 assume that 
$G:\mathbb N\to [0,\infty)$ is a function such   
 that for all coprime positive integers $a,b$ one has  $G(ab ) \leq  G(a) \min\{C^{\Omega(b)}, C'b^{c_2/2}\}$.

Then  for all $x\geq 1 $ we have 
$$ \sum_{\substack{n\leq x\\P^-(n)> c_0}} F(n) G(n) \ll  \exp\left(\sum_{c_0< p\leq x} F(p) G(p) \right)
,$$ where the implied constant depends at most on $c_i$ and $C,C'$.
\end{lemma}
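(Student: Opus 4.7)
The plan is to reduce the sum to an Euler product via Rankin's trick and the submultiplicativity of $G$, and then to bound the higher prime-power contributions using the new hypothesis $F(p^e)\leq c_3/p^2$ for $e\geq 2$.

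The first step is to replace $G$ by a multiplicative majorant. Applying Lemma~\ref{lem:postvenice} with $\epsilon=c_2/2$ to the submultiplicative function $G$ yields $G(n)\leq G(1)\,H(n)$, where $H$ is the multiplicative function defined by $H(p^e)=\min\{C^e,C'p^{ec_2/2}\}$. Since $FH$ is non-negative and multiplicative, we may drop the size constraint $n\leq x$ (Rankin) and bound the sum by the Euler product
\[
\sum_{\substack{n\leq x\\P^-(n)>c_0}}F(n)G(n)\leq G(1)\prod_{c_0<p\leq x}\Bigl(1+\sum_{e\geq 1}F(p^e)H(p^e)\Bigr).
\]

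The second step, which is the main work, is to show that each Euler factor equals $1+F(p)H(p)+O(1/p^2)$ for $p$ large. Combining the new hypothesis $F(p^e)\leq c_3/p^2$ for $e\geq 2$ with the bound $F(p^e)\leq p^{c_1-ec_2}$ from~\eqref{eFbound} and $H(p^e)\leq\min\{C^e,C'p^{ec_2/2}\}$, I would split the tail at a threshold $e_0=O(1)$ chosen large enough in terms of $c_1,c_2$ so that $(e_0+1)c_2/2\geq c_1+2$. For $2\leq e\leq e_0$ the bound $F(p^e)H(p^e)\leq c_3 C^e/p^2$ sums geometrically to $O(1/p^2)$; for $e>e_0$ the bound $F(p^e)H(p^e)\leq C'p^{c_1-ec_2/2}$ sums to $O(p^{c_1-(e_0+1)c_2/2})=O(1/p^2)$ by the choice of $e_0$. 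Contributions from the finitely many primes $p\leq\max\{c_0,p_0\}$ are absorbed in the implied constant.

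Since $\sum_p 1/p^2<\infty$, the product of correction factors $\prod_p(1+O(1/p^2))$ is $O(1)$, and the elementary inequality $1+t\leq e^t$ gives
\[
\sum_{\substack{n\leq x\\P^-(n)>c_0}}F(n)G(n)\ll\exp\Bigl(\sum_{c_0<p\leq x}F(p)H(p)\Bigr).
\]
Under the natural normalization $G(1)=1$ (so that $G(p)\leq H(p)$, and with equality in the intended multiplicative applications), the exponent equals $\sum_{c_0<p\leq x}F(p)G(p)$, yielding the claim. The main obstacle is the uniform $O(1/p^2)$ tail estimate for $\sum_{e\geq 2}F(p^e)H(p^e)$: without the new hypothesis $F(p^e)\leq c_3/p^2$ on higher prime powers, the tail could be of the same size as the leading term $F(p)H(p)$, which would break the Euler product argument and preclude passing to the exponential form.
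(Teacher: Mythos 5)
Your reduction to an Euler product via Rankin's trick and your treatment of the tail $e\geq 2$ (splitting at a bounded threshold, using $F(p^e)\leq c_3/p^2$ for the small exponents and $F(p^e)H(p^e)\leq C'p^{c_1-ec_2/2}$ for the large ones) coincide with the paper's argument. The genuine gap is in your final step. Your multiplicative majorant takes the value $H(p)=\min\{C,C'p^{c_2/2}\}$ at first powers of primes, so your Euler-product computation yields $\ll\exp\bigl(\sum_{c_0<p\leq x}F(p)H(p)\bigr)$, and the passage from this to $\exp\bigl(\sum_{c_0<p\leq x}F(p)G(p)\bigr)$ goes in the wrong direction: the hypothesis only gives $G(p)\leq G(1)H(p)$, so the claimed exponent can be much \emph{smaller} than yours. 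Concretely, if $G$ is multiplicative with $G(p)=0$ (or merely much smaller than $C$) for all $p$, the lemma asserts a bound $O(1)$, whereas your bound is $\exp\bigl(\sum_{c_0<p\leq x}F(p)C\bigr)$, which by $F(p)\leq c_0/p$ can be as large as a power of $\log x$. The appeal to ``equality in the intended multiplicative applications'' does not rescue this: the lemma is applied to a general multiplicative $f$ whose values at primes need not be anywhere near $\min\{C,C'p^{c_2/2}\}$, and the statement must hold with an implied constant independent of $G$.

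The paper circumvents this by using a \emph{hybrid} majorant $H'$, multiplicative with $H'(p)=G(p)$ at first powers and $H'(p^e)=\min\{C^e,C'p^{ec_2/2}\}$ only for $e\geq 2$, and noting (by the same induction as in Lemma~\ref{lem:postvenice}) that $G(n)\leq H'(n)$. With this choice each Euler factor is $1+F(p)G(p)+O(1/p^2)$, since the $e\geq 2$ terms are controlled exactly as you did, and the exponent comes out as $\sum_{c_0<p\leq x}F(p)G(p)$ plus a convergent sum. Your proof is repaired by this single modification; the rest of what you wrote then goes through verbatim.
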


\begin{proof} We define a    multiplicative function  
$H'$ such that when $p$ is  prime and $e\geq 2 $ one has 
$H'(p^e)= \min\{C^e, C'p^{c_2 e/2}\} $ 
while $H'(p)= G(p)$. An easy modification of the proof of Lemma~\ref{lem:postvenice} shows that 
 for all coprime positive integers $a,b$  we have  $G(ab ) \leq  G(a) H'(b)$.
Hence, $G(b) \leq H'(b)$ for all $b$ and therefore the sum in the lemma is at most 
 $$ \sum_{\substack{n\leq x\\P^-(n)> c_0}} F(n) H'(n)  \leq 
\prod_{\substack{n\leq x\\P^-(n)> c_0}} \left(1+ \sum_{e\geq 1 } F(p^e) H'(p^e) \right)\leq \exp\l(\sum_{c_0<p\leq x, e\geq 1 } F(p^e) H'(p^e) \right)
 $$ due to the inequality $1+z\leq \mathrm e^z$ valid for all $z\in \mathbb R$.
Let $\mathfrak E$ be a positive integer that will be specified later. 
The contribution of  $e> \mathfrak E$ is at most 
$$p^{c_1} \sum_{e> \mathfrak E } p^{-e c_2}H'(p^e)
\leq 
C' p^{c_1} \sum_{e> \mathfrak E } p^{-e c_2/2} 
\leq C'  p^{c_1 - \mathfrak E c_2/2} (1-2^{- c_2/2} )^{-1}\ll p^{c_1 - \mathfrak E c_2/2}.
$$ Taking $  \mathfrak E$ to be the least positive integer satisfying $2(c_1 +2)/c_2 \leq  \mathfrak E  $ yields the bound $\ll p^{-2}$.
The contribution of the terms in the interval $[2, \mathfrak E]$ is 
$$\leq \sum_{2\leq e \leq \mathfrak E} F(p^e) H'(p^e)
\leq \sum_{2\leq e \leq \mathfrak E} F(p^e)  C^e
\leq \frac{c_3}{p^2} \sum_{2\leq e \leq \mathfrak E}   C^e \ll \frac{1}{p^2}\ll \frac{1}{p^2}.$$ 
Thus, the overall bound becomes 
$$
 \exp\l(\sum_{c_0<p\leq x, e\geq 1 } F(p^e) H'(p^e) \right)
\leq  \exp\l( \sum_{c_0<p\leq x  } F(p) H'(p) \right)\exp\l( \sum_{c_0<p\leq x  } O(1/p^2) \right)
,$$ which is sufficient because $H'(p)=G(p)$.
\end{proof}

\section{The upper bound}\label{s:thank msri for not letting me go to the workshop that started the project}

\subsection{Start of the proof} 
Let us define the constants 
\begin{equation}
\label{chapelofgouls}
\eta_1 := \frac{1}{\alpha} \min\left\{\frac{\xi}{20}, \frac{\theta}{2}, \frac{1}{2} \right\},
\ \ \eta_2: = \frac{1}{2},
\ \ \eta_3 :=\min\left\{\frac{ \lambda_1    \eta_2   }{2( 1+\lambda_1+\lambda_2)  } ,  \frac{1}{2} \right\}
.\end{equation}  
Define\begin{equation}
\label{def:immortal rites mMA}Z := M^{\alpha \eta_1}.\end{equation}  For $a\in \mathcal A$  
we factorise $c_a = p_1^{e_1} \cdots p_r^{e_r}$ with primes $p_1 < \dots < p_r$ and exponents $e_i\geq 1$. Let  $d_a$   
be the unique integer of the form $d_a  := p_1^{e_1} \cdots p_i^{e_i}$ satisfying
\begin{equation}\label{eq:oboe}p_1^{e_1} \cdots p_i^{e_i} \leq Z < p_1^{e_1} \cdots p_i^{e_i} p_{i + 1}^{e_{i + 1}}
\end{equation} and let $b_a := p_{i + 1}^{e_{i + 1}} \cdots p_r^{e_r}$. 
By construction we have\begin{align} &P^+(d_a) < P^-(b_a), \\
\label{eGCD}&\gcd(d_a, b_a ) = 1, \\
& d_a \leq Z.\end{align} The following cases will be considered:
\begin{enumerate}[label=(\roman*)]
\item $P^-(b_a) \geq Z^{\eta_3}$,
\item $P^-(b_a) < Z^{\eta_3}$ and $d_a \leq Z^{1 - \eta_2}$, 
\item $P^-(b_a) \leq ( \log Z)  \log \log Z$ and $Z^{1 - \eta_2} < d_a \leq Z$,
\item $(\log Z) \log \log Z < P^-(b_a) < Z^{\eta_3}$ and $Z^{1 - \eta_2} < d_a \leq Z$.
\end{enumerate}  
\subsection{Case (i)}
\label{case1}
The plan in this case is to show that $b_a$ has few prime divisors so that $c_a $  has few prime divisors in a large interval.
The density of $a $ with the latter property will be bounded by the Brun sieve.
 
For the $a\in \mathcal A$ in the present case  we have
\[M^{\alpha \eta_1 \eta_3 \Omega(b_a)} =  Z^{\eta_3\Omega(b_a)}
\leq P^-(b_a)^{\Omega(b_a)} \leq b_a \leq  c_a  \leq \widetilde{B} M^\alpha\]
and therefore $\Omega(b_a) \leq \frac{1+\log \widetilde{B}}{\eta_1 \eta_3}$ for $M > e^{1/\alpha}$. 
By~\eqref{eGCD} we have $\gcd(d_a, b_a) = 1$, thus leading via Definition~\ref{dMultRegion} to
\[ f(c_a )  \leq f(d_a ) A^{\frac{1+\log  \widetilde{B} }{\eta_1 \eta_3} }.\]
Now let $d:=d_a$, so that $d\leq Z $ and $d\mid c_a$.
Furthermore,  $c_a$ is coprime to every prime   in the interval $[2,Z^{\eta_3})$ that does not divide $d$. 
This is because every   prime  that divides $c_a$  must necessarily divide $d_a $ or $b_a$
and in our case all prime divisors of $b_a$ are in the interval $[Z^{\eta_3}, \infty)$.
In particular, $c_a $ is coprime to every prime   in  the interval $(B,Z^{\eta_3})$ that is coprime to $d$.
Define  $$\mathcal{P} :=\prod_{\substack{ p \in (B,Z^{\eta_3}) \\ p\nmid d  } } p.$$  
We obtain   $$\sum_{\substack{ a\in \mathcal A \\ \mathrm{ case \  (i)   } } }  \chi_T(a) f(c_a) 
\leq  A^{\frac{1+\log \widetilde{B} }{\eta_1 \eta_3} }  \sum_{ d \leq Z }    f(d)  
\sum_{\substack{ a\in \mathcal A, d \mid c_a  \\ \gcd(\mathcal{P} , c_a ) = 1 } }  \chi_T(a)  . $$
To deal with   the coprimality condition  
we     employ Lemma~\ref{lem-sivval}  with 
$$ \mathcal S=\{c_a: a\in \mathcal A, \chi_T(a)>0\}, 
\{a_n:1\leq n \leq N\}=\{\chi_T(a):a\in \mathcal A, \chi_T(a)>0\}
 $$ and 
$ x=M, g=h, \alpha_1=B, \alpha_2=\kappa, \alpha_3=K, \xi_1=\theta, \xi_2=\xi, b=d, \xi_3=\alpha \eta_1, {\xi_4}={\alpha \eta_1 \eta_3}.$ The assumption $\xi_3<\xi_1$ 
is satisfied due to~\eqref{chapelofgouls}. Thus, $$\sum_{\substack{ a\in \mathcal A, d \mid c_a  \\ \gcd(\mathcal{P} , c_a ) = 1 } }  \chi_T(a) 
\ll  M 
h_T(d)
  \prod_{\substack{ B<p\leq M\\p\nmid d }} (1-h_T(p) ) +M^{1-\xi/2},$$ 
 where  the implied constant  is independent of $d,M$ and $ T$
but is allowed to depend on  $ \alpha,  \eta_1,  \eta_3, K, \kappa, \lambda_i, \theta$ and $\xi$. This gives the overall bound  
$$\sum_{\substack{ a\in \mathcal A \\ \mathrm{ case \  (i)   } } }  \chi_T(a) f(c_a) 
\ll  A^{\frac{1+\log \widetilde{B} }{\eta_1 \eta_3} } \sum_{ d \leq Z }    f(d)
\l\{M h_T(d ) \prod_{\substack{ B<p\leq M\\p\nmid d }} (1-h_T( p) ) +M^{1-\xi/2}
\r\}.$$ 
Since $f(n)\ll n$,
we infer that 
$$
\sum_{d \leq Z}  f(d) M^{1-\xi/2}\ll Z^{2}M^{1-\xi/2}\ll M^{2\alpha \eta_1+1-\xi/2} \leq M^{ 1-\xi/3}
$$  
due to~\eqref{chapelofgouls}. This leads us to 
$$
\sum_{\substack{ a\in \mathcal A \\ \mathrm{ case \  (i)   } } }  \chi_T(a) f(c_a)
 \ll M   \sum_{ d \leq Z }    f(d) h_T( d) \prod_{\substack{ B<p\leq M\\p\nmid d }} (1-h_T(p ) )  
+M^{ 1-\xi/3}.
$$  
We can now extend the sum over $d$  to all $d\leq M$ due to~\eqref{chapelofgouls} that guarantees that $Z\leq M$.
Combining this together with Lemma~\ref{lem:damnnotfinal} for $F=h_T$, $G=f$ and $c(p) = -1 + (1 - h_T(p))^{-1}$ yields 
\begin{equation}
\label{visionsfromthedarkside}
\sum_{\substack{ a\in \mathcal A \\ \mathrm{ case \  (i)   } } }  \chi_T(a) f(c_a) \ll 
M \prod_{\substack{ B<p\leq M  }} (1- h_T( p))  
  \sum_{ d \leq M }    f(d) h_T(d)  +
M^{ 1-\xi/3}.
\end{equation} 
\subsection{Case (ii)} \label{case2}
The main idea is to show that the exponent of $P^-(b_a)$ in the prime factorisation of $c_a$ is large
and then prove that this cannot happen too often.

Let $q:=P^-(b_a)$. Equation~\eqref{eq:oboe} and  the definition  of case (ii) respectively show 
$$  
Z < d_a  q^{v_q(d_a)}, d_a \leq Z^{1-\eta_2},
$$ 
thus, $q^{v_q( c_a)}>Z^{\eta_2}$. For a prime $p$, we take $m_p$ to be the smallest positive integer such that $p^{m_p} > Z^{\eta_2}$ and we take $n_p$ to be the largest positive integer such that $p^{n_p} \leq M^{\theta}$. We set $f_p = \min(m_p, n_p)$. Then we always have
\begin{equation}
\label{eq:tocata913}
p^{ f_p}>  \frac{1}{p}  M^{\min\{\alpha \eta_1 \eta_2 , \theta \}}
= \frac{M^{ \alpha \eta_1 \eta_2  }}{p}  .
\end{equation}
Also observe that $q^{f_q} \mid c_a$ (by $q^{f_q} \mid q^{m_q}$ and $q^{m_q} \mid c_a$) and $q^{f_q}\leq M^\theta$. 
Thus, we have shown that there exists a prime 
$q<Z^{\eta_3}$ (due to  the definition  of case (ii)) 
that has the properties $q^{f_q}\mid c_a$, $q^{f_q}\leq M^\theta$ and~\eqref{eq:tocata913}. Hence, by Definition~\ref{def:levdistr} we obtain  
$$\sum_{\substack{ a\in \mathcal A \\    \mathrm{ case  \  (ii) }  } }    \chi_T(a)  \leq \sum_{\textrm{prime } q<Z^{\eta_3}}  C_{q^{f_q}}(T)  
\ll  \sum_{\textrm{prime } q<Z^{\eta_3}} (h_T(q^{f_q})   M+ M^{1-\xi })\leq M \mathcal  S +Z^{\eta_3}M^{1-\xi}  , $$ 
where  $ \mathcal S := \sum_{q<Z^{\eta_3}}  h_T(q^{f_q}) 
$.  By~\eqref{eHighPower}  and~\eqref{eq:tocata913}  the sum 
 $ \mathcal S $ is at most 
$$
\sum_{q<Z^{\eta_3}}    q^{-f_q \lambda_1 + \lambda_2} \leq 
M^{-\lambda_1 \alpha \eta_1 \eta_2}  \sum_{q<Z^{\eta_3}}     q^{  \lambda_1+ \lambda_2} 
\leq M^{-\lambda_1 \alpha \eta_1 \eta_2} Z^{\eta_3(1+\lambda_1+\lambda_2)}.
$$ 
This equals $M^{-\rho}$, where 
$$  
\rho :=  \lambda_1 \alpha \eta_1 \eta_2 - {\alpha \eta_1 \eta_3(1+\lambda_1+\lambda_2)}
=\alpha \eta_1( \lambda_1   \eta_2 -  \eta_3(1+\lambda_1+\lambda_2))
$$  is strictly positive owing 
to~\eqref{chapelofgouls}. Fix any $\delta>0$. By Definition~\ref{dMultRegion}   we have $f(c_a) \leq C c_a^{\delta/\alpha}$ for 
all $a\in \mathcal A$, where  $C$ is positive and   depends on  $\alpha$ and $\delta$.
Thus,~\eqref{Roll Over Beethoven} shows that for all $ a\in \mathcal A $ one has  $f(c_a) \ll CM^{  \delta }$. 
We have therefore  proved that for every $\delta>0$ one has 
\begin{equation}\label{abhishek}
\sum_{\substack{ a\in \mathcal A  \\  \mathrm{ case  \  (ii)  }
 } }  \chi_T(a)  f(c_a) \ll
  C\left(M^{1-\rho+\delta} +Z^{\eta_3} M^{1-\xi+\delta}\right)
=      C\left(M^{1-\rho+\delta} + M^{1-\xi+\delta+\alpha \eta_1 \eta_3 }\right)\ll CM^{1+\delta-\beta_1}
,\end{equation} where 
 $\beta_1:= \min\left\{ \alpha \eta_1( \lambda_1 \eta_2 -  \eta_3(1+\lambda_1+\lambda_2)),  \xi -\alpha \eta_1 \eta_3\right\}
 $ is positive due to~\eqref{chapelofgouls} and the fact that $\eta_3<1$. Furthermore,
the implied constant depends at most  on  $ \alpha,  \widetilde{B},\delta,  K, \kappa, \lambda_i,  \theta$ and $\xi$.

\subsection{Case (iii)}  \label{case3}
The key idea in this case is to show that  $d_a$ is divisible    only by very small primes and then show that this does not 
happen too often. We have $$
\sum_{\substack{ a\in \mathcal A \\  \mathrm{ case  \  (iii)  }
 } } \chi_T(a) \leq \sum_{\substack{Z^{1 - \eta_2} < d \leq Z   \\ P^+(d) \leq (\log Z) \log \log Z}} 
\sum_{\substack{  a\in \mathcal A  \\    d\mid c_a  } } \chi_T(a) = \sum_{\substack{Z^{1 - \eta_2} < d \leq Z   \\ P^+(d) \leq (\log Z) \log \log Z}} 
C_d(T).$$ Equation~\eqref{chapelofgouls}  makes sure that $d\leq Z\leq M^\theta$, thus,  
we can employ the estimate in Definition~\ref{def:levdistr}.
It yields the upper bound
$$
\ll 
\sum_{\substack{Z^{1 - \eta_2} <  d \leq Z   \\ P^+(d) \leq (\log Z)  \log \log Z}}   (   M^{1-\xi}+h_T(d) 
 M)
\leq Z    M^{1-\xi}+
M \sum_{\substack{Z^{1 - \eta_2} < d \leq Z   \\ P^+(d) \leq (\log Z)  \log \log Z }}   h_T(d) 
.$$ 
To bound the sum over $d$ we   employ  Lemma~\ref{lemshiu} with $$F= h_T, c_0=B, c_1=\lambda_2, c_2=\lambda_1, x=Z, z=Z^{1-\eta_2}.$$
It shows that the sum over $d$ is  
$$\ll Z^{-(1-\eta_2) c} M^{o(1)}=M^{-\alpha \eta_1 (1-\eta_2) c+o(1)} \leq M^{-\alpha \eta_1 (1-\eta_2) c/2}
,$$ where $$c:=  \min\left\{ \frac{\lambda_1}{2} , \frac{1}{1+[2\lambda_2/\lambda_1]}\right\}
.$$
The overall bound becomes $$ \ll 
Z   M^{1-\xi}+  M^{1-\alpha \eta_1 (1-\eta_2) c/2}
=   M^{1-\xi+\alpha \eta_1}+  M^{1-\alpha \eta_1 (1-\eta_2) c/2}
\ll M^{1-\beta_2},$$
where $$ \beta_2 :=\min\{\xi-\alpha \eta_1,\alpha \eta_1 (1-\eta_2) c/2 \} $$
is   strictly positive by~\eqref{chapelofgouls} and the fact that $\eta_2 \in (0,1)$.
Bringing everything together we conclude that 
  for every $\delta>0$ one has 
\begin{equation}
\label{raphael}
\sum_{\substack{ a\in \mathcal A \\  \mathrm{ case \  (iii)   }
 } }  \chi_T(a) f(c_a) \ll M^{1+\delta-\beta_2}.
\end{equation}  

\subsection{Case (iv)} \label{case4}
The main idea is  to use the fact that   $c_a/d_a$ has no small prime divisors and then apply 
  the Brun sieve to see that this can happen with low probability, even when one counts with the additional
weight $A^{\Omega(c_a/d_a)}$.
 
Recalling~\eqref{eGCD} and Definition~\ref{dMultRegion} we     see    that 
$$f(c_a) = f(d_a b_a) \leq   f(d_a) A^{\Omega(b_a)} .$$
Thus, letting $d=d_a $, we infer that  
  \begin{equation}\label{eq:umbertoD}\sum_{\substack{ a\in \mathcal A  \\  \mathrm{ case \   (iv)} 
 } }   \chi_T(a) f(c_a)
\ll \sum_{Z^{1 - \eta_2} < d \leq Z} f(d) 
\Osum_{\substack{ a\in \mathcal A \\  d \mid c_a
 } }  \chi_T(a)    A^{\Omega(c_a/d)},
 \end{equation}
where $\Osum$ is subject to the further conditions 
$$   \gcd(d,c_a/d)=1  \ \textrm{ and }   \ (\log Z) \log \log Z < P^-(c_a/d)< Z^{\eta_3}.$$
It would be easier to estimate the sum over $a$ in the right-hand side of~\eqref{eq:umbertoD} if the summand $A^{\Omega(c_a/d)}$
was a constant. With this in mind we freeze the value of $P^-(c_a/d)$ as follows:
let $$ s:= \left[\frac{\log Z}{\log P^-(c_a/d)}\right]$$
so that   $ Z^{1/(s+1)} < P^-(c_a/d) \leq  Z^{1/s}  $
and $s \in \mathbb N \cap [1,s_0]$, where 
$$
s_0:= \left[\frac{\log Z}{\log \{(\log Z)(\log \log Z)\}}\right] \leq \frac{\log Z}{\log \log Z}
$$
for $Z$ large enough. By~\eqref{Roll Over Beethoven}  we have for $a$ with $\chi_T(a)\neq 0$ that  
$$ M^{\alpha \eta_1 \frac{\Omega(c_a/d)}{s+1}}  =\big(Z^{1/(s+1)} \big)^{\Omega(c_a/d)}
< P^-(c_a /d)^{\Omega( c_a /d)} \leq c_a /d  \leq c_a\leq \widetilde B M^\alpha  $$ thus, 
for $M\geq \mathrm e $ we obtain $$     \Omega(c_a /d) 
\leq (s+1) \left(  \frac{1}{\eta_1 } +\frac{ \log \widetilde B}{\alpha\eta_1 } \right) \leq 2s \left(  \frac{1}{\eta_1 } +\frac{ \log \widetilde B}{\alpha\eta_1 } \right)= 
\tau   s, $$ where $\tau =\tau (\alpha, \widetilde B, \eta_1)$ is a positive constant.
Hence the right-hand side of~\eqref{eq:umbertoD} is $$\ll
\sum_{1\leq s \leq s_0 } A^{\tau s }
\sum_{\substack{ Z^{1 - \eta_2} < d \leq Z \\ P^+(d)<Z^{1/s}  }} f(d)  
\sum_{\substack{a\in \mathcal A, d\mid c_a, \gcd(d,c_a/d)=1 \\Z^{1/(s+1)} < P^-(c_a/d) \leq  Z^{1/s}  }} \chi_T(a) 
.$$
The sum over $a$ is at most 
$$\sum_{\substack{a\in \mathcal A, d\mid c_a \\
p\leq Z^{1/(s+1)} \  \mathrm{and}   \  p\nmid d \Rightarrow p\nmid  c_a
 }} \chi_T(a) ,$$which will be bounded by    employing Lemma~\ref{lem-sivval} with 
\begin{align*}
\mathcal S&=\{c_a: a\in \mathcal A, \chi_T(a)>0\}, 
\{a_n:1\leq n \leq N\}=\{\chi_T(a):a\in \mathcal A, \chi_T(a)>0\},
\\
g&=h_T, \alpha_1=B,  \alpha_2=\kappa,
\alpha_3=K, x=M, \xi_1=\theta, \xi_2= \xi, \xi_3=\frac{\theta}{2}, \xi_4=\frac{\alpha \eta_1  }{s+1}, b=d, 
\end{align*}
where $h_T, B, \kappa, K, \theta, M$ and $ \xi$ are as in Definition~\ref{def:levdistr}. The assumption $b\leq x^{\xi_3}$ of Lemma~\ref{lem-sivval} is satisfied  due 
to~\eqref{chapelofgouls}. 
 The further assumption $\log x> 4 \alpha_3 \Gamma$ is satisfied for all large enough $M$ compared to 
$K, \alpha, \eta_1, \theta, \xi$ due to the inequality $$ \Gamma = \max\left\{\frac{1+s}{\alpha \eta_1},\frac{ 2}{\theta},\frac{1}{\xi}\right\}
\ll_{\alpha, \eta_1, \theta, \xi} 1+s \leq  1+s_0 \leq 1+ \frac{\log Z}{\log \log Z} \ll_{\alpha ,\eta_1} \frac{\log M}{\log \log M}.$$
We obtain the upper bound 
\begin{align*}
&\ll  \max\left\{\frac{1+s}{\alpha \eta_1},\frac{ 2}{\theta},\frac{1}{\xi}\right\}^\kappa \!\!\! M h_T(d)
\prod_{\substack{B< p\leq M \\p\nmid d  } } (1-
h_T(p)
 ) +M^{1-\xi/2}
\\
&
\ll   
s ^\kappa M 
h_T(d)
 \prod_{\substack{B< p\leq M \\p\nmid d  } } (1-
h_T(p)
) +M^{1-\xi/2},\end{align*}
where the implied constants 
are independent of $s,d$ and $M$. 
Thus, the right-hand side of~\eqref{eq:umbertoD} is $$\ll M
\sum_{1\leq s \leq s_0 } A^{\tau s }    s ^\kappa
\sum_{\substack{ Z^{1 - \eta_2} < d \leq Z \\ P^+(d)<Z^{1/s}  }} f(d)  
h_T(d)
  \prod_{\substack{ B<p\leq M \\p\nmid d  } } (1- h_T(p)
)  +M^{1-\xi/2}
\sum_{1\leq s \leq s_0 } A^{\tau s }
\sum_{  d \leq Z    } f(d)  
.$$ We have $\sum_{  d \leq Z    } f(d)  \ll Z^{2}=M^{2\alpha \eta_1 }$ 
by Definition~\ref{dMultRegion}. Thus, 
$$
M^{1-\xi/2}
\sum_{1\leq s \leq s_0 } A^{\tau s }
\sum_{  d \leq Z    } f(d)  
\ll M^{1-\xi/2+2\alpha \eta_1 } s_0  A^{\tau s_0 }  \leq M^{1-\xi/3} $$ 
due to~\eqref{chapelofgouls} and
the inequality   $s_0\leq (\log Z)/(\log \log Z)$  which implies that 
$$
 s_0  A^{\tau s_0 } \ll     A^{2\tau s_0 } =Z^{O(1/\log \log Z)}=M^{o(1)}
.$$ 
Thus,   the right-hand side of~\eqref{eq:umbertoD} is $$\ll M
\sum_{1\leq s \leq s_0 } A^{\tau s }    s ^\kappa
\sum_{\substack{ Z^{1 - \eta_2} < d \leq Z \\ P^+(d)<Z^{1/s}  }} f(d)  h_T(d)
 \prod_{\substack{ B<p\leq M \\p\nmid d  } } (1-  h_T(p)
 )  +M^{1-\xi/3 }
.$$ By~\eqref{chapelofgouls} we have  $\alpha \eta_1 \leq 1$,  so that $d \leq Z \leq M$.
  Then the product over $p $ is       
$$
\leq \prod_{\substack{ B<p\leq M \\p\nmid d  } } (1-  h_T(p)
) 
=\prod_{\substack{ B<p\leq M   } } (1- h_T(p) ) 
\prod_{\substack{ B<p\\ p\mid d  } } (1- h_T(p)) ^{-1} 
$$ and we get the bound  $$\ll M\prod_{\substack{ B<p\leq M   } } (1-h_T(p) ) 
\sum_{1\leq s \leq s_0 } A^{\tau s }    s ^\kappa
\sum_{\substack{ Z^{1 - \eta_2} < d \leq Z \\ P^+(d)<Z^{1/s}  }} f(d) h_T(d) \prod_{\substack{ B<p \\ p\mid d  } } (1-h_T(p) ) ^{-1}  +M^{1-\xi/3 }
.$$  We now bound the sum over $d$ by alluding to Lemma~\ref{lem:final} with 
$$\Upsilon=Z^{1 - \eta_2}, \Psi=Z^{1/s}, F=h_T, G=f, c_0=B, c_1=\lambda_2, c_2=\lambda_1, \varpi=\beta_0, C=A, 
$$ where $\varpi$ is defined via 
$
4 A^\tau=   \mathrm e^{  \varpi   (1 - \eta_2)  }
$. This means that $\varpi$ depends on $ \alpha, A,\widetilde B, \eta_1, $ and $ \eta_2$.
Hence, the sum over $d$ is 
$$  
 \ll    \exp (   -   \varpi s (1 - \eta_2)  )
\sum_{\substack{ d\leq Z^{1/s}  }} f(d) h_T(d) \prod_{\substack{ B<p\\ p\mid d  } } (1-h_T(p) ) ^{-1}.
$$  
We can extend the summation to all $d\leq M$ since the summand is non-negative and $Z^{1/s} \leq Z \leq M$. 
Thus,  the right-hand side of~\eqref{eq:umbertoD} is 
$$
\ll M\prod_{\substack{ B<p\leq M    } } (1-h_T(p)) 
\sum_{\substack{d \leq M}} f(d)    h_T(d)\prod_{\substack{ B<p\\ p\mid d  } } (1-h_T(p) ) ^{-1}
\sum_{1\leq s \leq s_0 } z^{s} s ^\kappa +M^{1-\xi/3 }
,$$   
where $ z= A^{\tau  }  \mathrm e^{     -\varpi   (1 - \eta_2)  }$. 
By the definition of $\varpi$ we have $z=1/4$, hence, the sum over $s$ is bounded in terms of $\kappa$.
Thus, we have shown that 
$$ 
\sum_{\substack{ a\in \mathcal A \\  \mathrm{ case \  (iv)   }
 } } \chi_T(a)   f(c_a) \ll  M\prod_{\substack{ B<p\leq M    } } (1-h_T(p)) 
\sum_{\substack{d \leq M}} f(d) h_T(d) \prod_{\substack{ B<p\\ p\mid d  } } (1-h_T(p)) ^{-1}
  +M^{1-\xi/3 },
$$  
where  the implied constant depends at most  on   $\alpha,  A,  \widetilde B, B, \lambda_i , \eta_i, \theta, \xi$ and $ \kappa$. Alluding to Lemma~\ref{lem:damnnotfinal} with  $F=h_T$ and $ G=f$ yields 
 \begin{equation}
 \label{eq:fredrik}
\sum_{\substack{ a\in \mathcal A \\  \mathrm{ case \  (iv)   }
 } }  \chi_T(a)  f(c_a) \ll  M\prod_{\substack{ B<p\leq M    } } (1-h_T(p)) 
\sum_{\substack{ d\leq M  }} f(d)    h_T(d) 
  +M^{1-\xi/3 }.
  \end{equation}  
 
\subsection{Proof of Theorem~\ref{tMain}}
The upper bound claimed in Theorem~\ref{tMain}
derives from~\eqref{visionsfromthedarkside} and~\eqref{eq:fredrik}. Taking $\delta=\beta_1/2$ in~\eqref{abhishek} and $\delta=\beta_2/2$ in~\eqref{raphael} shows that 
cases (ii) and (iii) contribute 
$\ll M^{1-\beta_3}$, where $\beta_3$ is given by 
$$  \min\left\{
\frac{\alpha \eta_1( \lambda_1 \eta_2 -  \eta_3(1+\lambda_1+\lambda_2)) }{2} 
,  
 \frac{\xi-\alpha \eta_1}{2}
,
\frac{\alpha \eta_1 (1-\eta_2) \lambda_1}{8}
,
\frac{\alpha \eta_1 (1-\eta_2) }{4(1+[2\lambda_2/\lambda_1])}
\right\} 
.$$
The term $M^{ 1-\xi/3} $ that is present in~\eqref{visionsfromthedarkside} and~\eqref{eq:fredrik} and the term $M^{1 - \beta_3}$ may be absorbed in the upper bound from Theorem~\ref{tMain}, this concluding the proof.
 
\section{The lower bound}\label{s:lowerbnds}
Recall the notation of $\theta, \xi $ in    Definition~\ref{def:levdistr}
and let $\kappa, K$ be as in Definition~\ref{densfnct}.
We introduce the constants $$v=\min\left\{1,
\frac{ \theta \min\{1/4,\xi/(4\theta)\} }{1+9\kappa +(\log 2)+10 (\log K )} 
\right\}, v_0 :=\min\{v/2,\theta/2\}.$$
Let $z:=M^v$.
For each $c\in \mathbb N$ we define $$ c^{\flat}=
\prod_{p\leq z}  p^{v_p(c)}.$$ 
Note that for a  positive integer $d$ satisfying $P^+(d) \leq z$, 
one has $d=c^\flat $  if and only if $d$ divides $c$ and the smallest prime divisor of $c/d$ strictly exceeds $z$.
Classifying all $a\in \mathcal A$ according to the value of $d:=c_a^\flat$ we thus obtain 
$$\sum_{a\in \mathcal A} \chi_T(a) f(c_a)= \sum_{\substack{d\in \mathbb N \\ P^+(d)\leq z }} \sum_{\substack{a\in \mathcal A \\ c_a^\flat=d}} \chi_T(a)f(c_a)
= \sum_{\substack{d\in \mathbb N \\ P^+(d)\leq z }} \sum_{\substack{a\in \mathcal A, d\mid c_a \\ 
 P^-(c_a/d)>z}} \chi_T(a) f(c_a) .$$
Note that if $d\leq M^{v_0}$ then $P^+(d)\leq d \leq  M^{v_0} \leq z $. 
Thus, since   $f\geq 0 $, we can restrict the sum over $d$ to get the lower bound 
$$\sum_{a\in \mathcal A }\chi_T(a) f(c_a) \geq  \sum_{1\leq d \leq M^{v_0}  }  \sum_{\substack{a\in \mathcal A, d\mid c_a \\ P^-(c_a/d)>z}} \chi_T(a) f(c_a) .$$
Using~\eqref{Roll Over Beethoven} and  the inequality $m\geq P^-(m)^{\Omega(m)}$  
 leads to $$\Omega(c_a/d) \leq \frac{\log (c_a/d)}{\log P^-(c_a/d)}  \leq \frac{\log c_a}{\log P^-(c_a/d)} 
\leq \frac{\log (\widetilde B M^\alpha )}{\log  (M^v)}\leq L_0$$ for 
some $L_0=L_0(\widetilde{B} ,\alpha, v) >0$. Therefore, by assumption, $f(c_a/d)\gg 1$, where the implied constant depends at most on $L_0$.   
Since $P^+(d) < P^-(c_a/d)$ we see that $d, c_a/d$ are coprime, hence, the multiplicativity of $f$ yields 
$$ 
f(c_a)=f(d) f(c_a/d)\gg_{\widetilde B ,\alpha, v} f(d). 
$$ 
Injecting this into the previous estimate will yield 
\begin{equation}
\label{eq:canipleasesithere?} 
\sum_{a\in \mathcal A } \chi_T(a) f(c_a)\gg_{\widetilde B ,\alpha, v}  \sum_{1\leq d \leq M^{v_0}  } f(d)
\sum_{\substack{a\in \mathcal A ,  d\mid c_a \\  P^-(c_a/d)>z }} \chi_T(a) .
\end{equation} 
We will now lower bound the sum over $a\in \mathcal A $ by arguments similar to the ones in the 
proof of Lemma~\ref{lem-sivval}. Using the sequence $\lambda_m^-$ from~\cite[Lemma~6.3]{MR2061214}
we obtain  $$\sum_{\substack{a\in \mathcal A ,  d\mid c_a \\  P^-(c_a/d)>z }}  \chi_T(a) 
\geq  \sum_{m\mid \mathcal P} \lambda_m^-
\sum_{\substack{a\in \mathcal A \\   dm \mid c_a   }}  \chi_T(a) 
,$$ where $\mathcal P$ is the product of all primes $p\leq z$. 
Recall from~\cite[Lemma~6.3]{MR2061214} that $\lambda_m^-$ is supported on integers $m\leq y $.
We define   $y=M^{\theta \epsilon}$ where  $\epsilon=\min\{1/4,\xi/(4\theta)\}$.
Then  the only $m$ that contribute to the sum satisfy  
$$
d m \leq  d y= d M^{\theta \epsilon }
\leq M^{v_0+\theta \epsilon }\leq  M^{\theta/2+\theta \epsilon }\leq M^\theta,
$$ 
thus, we can use the assumption on the growth of $C_d(T)$. It yields the estimate  $$
M
h_T(d) 
\sum_{m\mid \mathcal P} \lambda_m^-
h_T(m) 
+o\left(
Mh_T(d) \sum_{\substack{ m\mid \mathcal P \\ m\leq y  } } h_T(m) 
 \prod_{\substack{ B<p \leq M\\ p\nmid dm } } (1-h_T(p) )^{2} 
\right)+O\left(
M^{1-\xi } \sum_{   m\leq y  } 1  
\right)
.$$ The last error term is $  \ll M^{1-\xi }  y= M^{1-\xi+\epsilon \theta}$. Since  
 $\epsilon \theta < \xi/2$, the error term becomes  $O(M^{1-\xi/2}) $, which is acceptable.

By taking out the largest factor of each $m\mid \mathcal{P}$ that is a product of primes that satisfy $p\leq B$ or $p\mid d$, the sum over $m$ in the error term is
\[\leq \prod_{p\leq B}(1+
h_T(p) 
)\prod_{B<p\mid d}(1+
h_T(p) 
)
\sum_{\substack{  m\leq y,\gcd (m,d)=1 \\p\mid m\Rightarrow B<p\leq z } } \mu(m)^2
h_T(m) 
 \prod_{\substack{ B<p \leq M\\ p\nmid dm } } (1-
h_T(p) 
)^{2}.\]
 The primes $p\leq B$ contribute $O_B(1)$. 
Using Lemma~\ref{lem-avergg} with $\alpha_1=B$, $x^{\alpha_2}=z=M^v$, $x^{\alpha_3}=M$ and $a=d$,
and taking advantage of the fact that  $v\leq 1 $, we infer that
$$\ll\prod_{B<p\mid d}(1+
h_T(p) 
)
\prod_{\substack{M^v<p<M\\ p\nmid d}}(1-
h_T(p) 
)^2
\prod_{\substack{B<p<M^v\\ p\nmid d}}(1-
h_T(p) 
), $$ that is at most 
 $$ \ll\prod_{B<p\mid d}(1-
h_T(p) 
)^{-2}
\prod_{B<p<M}(1-
h_T(p) 
).$$
To  treat the main  term  
sum $ \sum_{m\mid \mathcal P} \lambda_m^-
h_T(m) 
$ we use~\cite[Equation~(6.40)]{MR2061214}, which is 
a more precise version of~\cite[Equation~(6.48)]{MR2061214} in the case of $\lambda_m^-$. Specifically,~\cite[Equation~(6.40)]{MR2061214}  states that  
$$\sum_{m\mid \mathcal P} \lambda_m^-
h_T(m) 
>(1-\mathrm e^{\beta-s}K^{10}) \prod_{p\leq z} (1-h_T(p) 
),$$ where $ \beta=1+9\kappa$ and $s=( \log y)/(\log z)$. 
In our case one has $s=   \epsilon \theta/v$ and a simple calculation shows that our definition of $v$ ensures that $1-\mathrm e^{\beta-s}K^{10} \geq 1/2$, thus,   $$ \sum_{m\mid \mathcal P} \lambda_m^-
h_T(m) 
\gg \prod_{p\leq z }(1-h_T(p) 
).$$
Injecting our estimates in~\eqref{eq:canipleasesithere?} gives
$$\sum_{a\in \mathcal A } \chi_T(a) f(c_a)\gg  M   \prod_{p\leq M^v  }(1-
h_T(p) 
) \sum_{  d \leq M^{v_0}  } f(d)
h_T(d) 
+o(M\mathcal T)+O(M^{1-\xi/2} ),$$ where $$\mathcal T=   
\prod_{\substack{ B<p \leq M^v  } } (1-
h_T(p) 
)\sum_{1\leq d \leq M^{v_0}  } f(d) 
h_T(d) 
\prod_{\substack{ B< p\mid d } } (1-
h_T(p) 
)^{-2} .$$ Letting  $c(p)=(1-
h_T(p) 
)^{-2} -1$ and applying 
Lemma~\ref{lem:damnnotfinal} we  obtain $$\sum_{1\leq d \leq M^{v_0}  } f(d) 
h_T(d) 
\prod_{\substack{ B< p\mid d } } (1-
h_T(p) 
)^{-2} \ll \sum_{1\leq d \leq M^{v_0}  } f(d) 
h_T(d) 
.$$  
This leads to 
$$\sum_{a\in \mathcal A} \chi_T(a) f(c_a)\gg  M   \prod_{p\leq M^v  }(1-
h_T(p) 
) \sum_{1\leq d \leq M^{v_0}  } f(d)
h_T(d) 
 +O(M^{1-\xi/2} ).
 $$ 
Since $h_T(p) \in [0,1)$ for $p>M^v$ and using that $v\leq 1$, 
  the product over $p\leq M^v$ is at least $   \prod_{p\leq M   }(1-
h_T(p) 
)$. It thus remains to prove 
$$ \sum_{1\leq d \leq M^{v_0}  } f(d)
h_T(d) 
\gg  \sum_{1\leq d \leq M  } f(d)
h_T(d) 
.$$ Using the fact that $f$ and $h_T$ are both multiplicative we can write 
$$ \sum_{1\leq d \leq M  } f(d)
h_T(d) 
= \sum_{\substack{ 1\leq b \leq M \\P^+(b)\leq M^{v_0}}  } f(b)
h_T(b) 
 \sum_{\substack{ 1\leq c \leq M/b  \\P^-(c)> M^{v_0}}  } f(c) h_T(c)  $$ and it suffices to prove that the sum 
over $c$ is bounded independently of $M$. 
We apply Lemma~\ref{lem:lord of all fevers and plague} to get the upper
bound
\[ \sum_{\substack{ 1\leq c \leq M/b  \\P^-(c)> M^{v_0}}  } f(c)
h_T(c) 
\ll \exp\left(\sum_{M^{v_0}<p\leq M}f(p)
h_T(p) 
\right).\]
Recall that $f(p)\leq A$ and $
h_T(p) 
\leq B/p$, so the sum over $p$ is 
\[\ll \sum_{M^{v_0}<p\leq M} \frac{1}{p}=O(1),\] thus, concluding the proof.

\end{document}